\newtheorem{lemma}{Lemma}
\newtheorem{theorem}{Theorem}
\newtheorem{proposition}{Proposition}
\begin{document}
\pagestyle{headings}  

\title[]{Operators that `coerce' \\ the surjectivity of convolution}

\date{15 December 2013}

\author{Richard F. Bonner}

\thanks{The author acknowledges a former scholarship of the Centre for Mathematical Analysis in Canberra, a partial support by the Swedish Institute, grant 01424/2007, and the recent reduction in teaching duties at his present place of accreditation.} 

\address{M\"{a}lardalen University College, V\"{a}ster\aa s, Sweden.}

\email{richard.bonner@mdh.se}

\maketitle

%
%

\section{Introduction}

We relate to earlier \cite{hormander_63,abramczuk,bonner} and more recent work \cite{bonet-etal,fernandez-etal} about perturbation of surjective convolution.
Let $\Phi$ be a space of smooth functions in $R^n$ with dual $\Phi'$ consisting of (ultra-) distributions with compact support.
Following Ehrenpreis \cite{ehrenpreis_56,ehrenpreis_60} one calls $u \in \Phi'$ invertible if $u$ acts surjectively in $\Phi$ via convolution.
In addition to Ehrenpreis', there is considerable body of work about invertibility in various settings, especially H\"{o}rmander's \cite{hormander_62,hormander_63,hormander_68,hormander_79} for the ground case of Schwartz distributions, and numerous others' for more general classes, in particular \cite{bonet-etal,braun-etal,chou,frerick-wengenroth,cioranescu_80,cioranescu_zsido,cioranescu_zsido_arkiv}.

However, the general characterizations of invertibility, it is in the nature of things, are often ineffective in specific cases, and so the question about the stability of invertibility under perturbations naturally arises. 
We are therefore interested in linear maps $p$ in $\Phi'$ leaving the set of the non-invertible elements stable, thus `preserving' non-invertibility: if $u$ is non-invertible then so is $pu$.
One may then say (for lack of a better term) that $p$ coerces invertibility, or, for short, that $p$ is coercive: if $pu$ is invertible then so is $u$.
Obviously, should the inverse $p^{-1}$ exist, then $p$ preserves non-invertibility if and only if $p^{-1}$ preserves invertibility.

For the Schwartz distributions with compact support,
$\Phi'=\mathcal{E}'=\mathcal{E}'(R^n)$, 
it was shown in \cite{hormander_79}, and with a simpler proof in \cite{abramczuk}, that multiplication by a real analytic function is coercive.
It is then not hard to see that `convolution operators with real analytic coefficients', the finite sums of convolution by elements of $\mathcal{E}'$ composed with multiplication by analytic functions, are coercive.
The next step, by analogy with the passage from the differential to pseudo-differential case, is to consider `pseudo-convolution operators'; the real analytic so generalized operators, we show, indeed are coercive on $\mathcal{E}'$.

The multiplication by a non-analytic function $f$ will in general not be coercive, but it may coerce invertibility on a space of smoother functions, the regularity of $f$ determining how much smoother; we quantify this phenomenon with the scale of Beurling spaces $\mathcal{E}_w'$, while measuring the regularity of $f$ with the Beurling classes $\mathcal{E}_w$ and the Denjoy-Carleman classes $C^L$; the Gevrey case looks particularly neat.
Generalizing the multipliers are pseudo-convolution operators, considered on $\mathcal{E}_w$, regular in the $C^L$ sense.
It follows that real analytic such operators with parametrix of the same form, in particular the elliptic pseudo-differential operators, both coerce invertibility and preserve it.
%
%

\section{Functions and distributions} \label{spaces}

Recall the standard definitions, referring to \cite{bjorck_66,bjorck_71,chou,hormander_79,hormander_pde} for background.

\subsection{Denjoy-Carleman classes} \label{d-c}

Let $L= (L_0, L_1, \ldots )$, $L_0=1$, be an increasing sequence of positive numbers satisfying $C^{-1} L_{k+1} \geq L_k \geq k$ for $k \geq 0$ and some constant $C > 0$.
For $K \subset R^n$ compact, $f$ smooth (complex-valued) function in a neighbourhood of $K$, and $r > 0$, put
\begin{equation}\label{dc-norm}
|f|_{L,r,K} := \sup_{\alpha \geq 0, x \in K} ( \frac{r}{L_{|\alpha|}})^{|\alpha|} \cdot |D^\alpha f(x)|.
\end{equation}

One says that $f$ is of class $C^L$ on $K$ if $|f|_{L,r,K}$ is finite for some $r > 0$, writing $f \in C^L(K)$.
For $X \subset R^n$ open, let $C^L(X)$ be the set of all smooth functions in $X$, which are of class $C^L$ on every compact set in $X$.
Recall that $C^L(X)$ is called a quasi-analytic class if it contains no non-trivial element with compact support; the exact condition for this is the divergence of the integral $\int_1^{+ \infty} q_L (t) \cdot t^{-2} \, dt$, where $q_L(t)$ denotes the logarithm of the least upper bound of $(t/L_k)^k$, $k \geq 0$.

\subsection{Beurling classes} \label{beurling}

\subsubsection{Weight functions}

Let $M$ be the set of all non-negative sub-additive functions $w$ on $R^n$, normalized by $w(0)=0$, each bounded from below by a function of form $a + b\cdot \log (1+|\xi|)$ with $a$ real, $b$ positive, and $\xi \in R^n$, and with growth bounded at infinity in the integral sense
\begin{equation}\label{bound}
\int \frac{w(\xi)}{1+|\xi|^{n+1}} \, d\xi < \infty.
\end{equation}
Assume for simplicity the functions in $M$  symmetric, $w(\xi)=w(-\xi)$, $\xi \in R^n$.

For positive functions $w_1$, $w_2$, in $R^n$, write $w_2 \succ w_1$, and say that $w_2$ dominates $w_1$, if for some constants $A$ real and $B$ positive, and all $\xi \in R^n$,
\begin{equation}\label{domination}
w_2(\xi) \geq A + B \cdot w_1(\xi).
\end{equation}
Write $w_1 \sim w_2$ if both $w_2 \succ w_1$ and $w_1 \succ w_2$ hold.
Write $w_2 \succ \succ w_1$, and say that $w_2$ strictly dominates $w_1$, if for some real constant $A$, some function $B$ with $\lim_{|\xi| \rightarrow \infty} B(\xi) = \infty$, and all $\xi \in R^n$,
\begin{equation}\label{strict-domination}
  w_2(\xi) \geq A + B (\xi) \cdot w_1(\xi).
\end{equation}
In short, $w_2 \succ w_1$ if $w_2 = O(w_1)$, and $w_2 \succ \succ w_1$ if $w_2 = o(w_1)$, for $|\xi|$ large. 

Recall (cf. Theorem 1.2.7 in \cite{bjorck_66}) that every $w \in M$ is dominated by some $\tilde{w} \in M$ of the form $\tilde{w}(\xi)= \Omega(|\xi|)$ with $\Omega$ concave; and $\Omega$ may be chosen so that $\tilde{w} \succ \succ w$.

We say that $w \in M$ is \emph{slowly varying} if
\begin{equation}\label{slowly-varying}
\inf_{x \in B(\xi, \delta(\xi))} w(x) \ \ \succ \sup_{x \in B(\xi, \delta(\xi))} w(x)
\end{equation}
whenever $\delta$ is a positive function in $R^n$ and $\delta(\xi)=o(|\xi|)$ as $|\xi| \rightarrow \infty$; as usual, $B(x.r)$ stands for the Euclidean ball of radius $r$ at $x$.
Note that the functions in $M$ that are monotone in the radius are slowly varying.

\subsubsection{Spaces $\mathcal{D}_w$ and $\mathcal{E}_w$.}

For $\phi \in C_0^\infty(R^n)$, $w \in M$, and $\lambda > 0$, put
\begin{equation}\label{w-norm}
\|\phi\|_{\lambda}^w  := \int |\hat{\phi}(\xi)| \,  e^{\lambda w(\xi)} \, d\xi ,
\end{equation}
with $\hat{\phi}$ denoting the Fourier transform of $\phi$, and let $\mathcal{D}_w$ be the set of $\phi \in C_0^\infty(R^n)$ with $\|\phi\|_{\lambda}^w$ finite for every $\lambda > 0$.
Due to condition (\ref{bound}), every $\mathcal{D}_w$ contains `local units', that is, for every $w \in M$, whenever $K \subset X \subset R^n$, $K$ compact and $X$ open, there exists $\phi \in \mathcal{D}_w$ with support in $X$ and equal to one in a neighbourhood of $K$; see \cite{bjorck_66}.
Recall also that $\mathcal{D}_{w_1} \subset \mathcal{D}_{w_2}$ if and only if $w_1 \succ w_2$.
Topologize each $\mathcal{D}_{w}$ in the standard way, as in the Schwartz case
$w(\xi)= \log (1+|\xi|)$.

For $f \in C^\infty(R^n)$, $\phi \in \mathcal{D}_w$, and $\lambda > 0$, put
\begin{equation}\label{wl-norm}
\|f\|_{\lambda, \phi}^w := \|\phi f\|_{\lambda}^w  ,
\end{equation}
and write 
$\mathcal{E}_w$ for the set of $f \in C^\infty$ with $\|f\|_{\lambda, \phi}^w$ 
finite for all $\phi \in \mathcal{D}_w$ and $\lambda > 0$.
The set $\mathcal{E}_w$ is a Frechet space under the semi-norms (\ref{wl-norm}).

Recall that
\begin{equation}\label{w-norm-fourier}
|||\phi |||_\lambda^w := \sup_{\xi \in R^n} |\hat{\phi}(\xi)| \, e^{\lambda w(\xi)}
\end{equation}
with $\lambda > 0$ is an equivalent set of semi-norms on $\mathcal{D}_w$; then $\mathcal{E}_w$ is equivalently topologized using the semi-norms 
$$ ||| f|||_{\lambda, \phi}^w := |||\phi f |||_{\lambda}^w, \  \ \phi \in \mathcal{D}_w,   \lambda > 0.$$

When 
$w(\xi)=|\xi|^\alpha$, $0 < \alpha < 1$, 
the spaces $\mathcal{D}_w$ and $\mathcal{E}_w$ will be noted by 
$\mathcal{D}_{(\alpha)}$ and 
$\mathcal{E}_{(\alpha)}$,
respectively; these are known as the (`small') Gevrey spaces.

For $X \subset R^n$ open, one defines $\mathcal{D}_w(X)$ and $\mathcal{E}_w(X)$ as above, but admitting $\phi \in C_0^\infty(X)$ only.

\subsubsection{Spaces $\mathcal{D}_w'$, $\mathcal{E}_w'$, and $\mathcal{F}_w$.}

Recall, duality in Beurling spaces works essentially as in the Schwartz case.
The space $\mathcal{D}_w$ embeds topologically with dense image in $\mathcal{E}_w$, which gives an embedding of the strong duals, $\mathcal{E}_w'$ into $\mathcal{D}_w'$.
Due to the existence of partition of unity in $\mathcal{E}_w$, implied by condition (\ref{bound}), the dual $\mathcal{E}_w'$ is thus identified with distributions with compact support in $\mathcal{D}_w'$.
The notion of $w$-singular support of $u \in \mathcal{D}_w'$ is defined in the usual way.
One may then define convolutions; in particular, $\mathcal{E}_w'$ receives ring structure.
The Fourier-Laplace transform of $u \in \mathcal{E}_w'$, defined by 
$\hat{u}(\zeta):= \langle u, e^{-i \, \langle \cdot, \zeta \rangle} \rangle$, 
$\zeta = \xi + i \eta$, $i := \sqrt{-1}$, 
gives then by Paley-Wiener a topological isomorphism of the convolution ring $\mathcal{E}_w'$ and the ring $\hat{\mathcal{E}}_w'$ of entire functions $g$ of exponential type, satisfying growth conditions
\begin{equation}\label{paley-wiener}
|g|^w_{\infty, \lambda} := \sup_{\xi \in R^n} |g(\xi)| \, e^{\lambda w(\xi)} < \infty
\end{equation}
with real $\lambda = \lambda (g)$.
Moreover, $g \in \hat{\mathcal{D}}_w$ if and only if (\ref{paley-wiener}) holds for all $\lambda > 0$.
Note that a locally integrable function $U$ on $R^n_\xi$, which satisfies
(\ref{paley-wiener}), defines a distribution $u \in \mathcal{D}_w'$ by 
$\langle u, \phi\rangle := \int U(\xi) \hat{\phi} (-\xi) \, d\xi$.
Write then $u \in \mathcal{F}_w^{(\lambda)}$
and 
$\hat{u} = U \in \hat{\mathcal{F}}_w^{(\lambda)}$, 
and let $\mathcal{F}_w$ be the union of
$\mathcal{F}_w^{(\lambda)}$ over real $\lambda$ 
(observe that this is a smaller space than the one in \cite{bjorck_66} denoted by the same symbol).
One may use the semi-norms (\ref{w-norm-fourier}) also in $\mathcal{F}_w^{(\lambda)}$, that is 
$||| u |||_\lambda^w =|\hat{u}|^w_{\infty, \lambda}$, but allowing now $\lambda$ real.

\subsubsection{Convolution.}

Use the standard notation $\check{u}$ for the reflection of a distribution $u$ in the origin.
Acting by convolution, an element $\check{u} \in \mathcal{E}_w'$ gives rise to a bounded linear operator $T_{\check{u}} f (x)= (\check{u} * f) (x)= \langle \check{u}, f(x - \cdot )\rangle$ on $\mathcal{E}_w$.
The dual operator $T_{\check{u}}'$ acts on $\mathcal{E}_w'$ as convolution by $u$, and this action is transformed into multiplication by $\hat{u}$ in $\widehat{\mathcal{E}_w'}$.
The latter is a ring of entire functions with no zero-divisors, hence $T_{\check{u}}'$ is injective, and $T_{\check{u}}$ has dense range.
Thus $T_{\check{u}}$ is surjective if and only if the range of $T_{\check{u}}'$ is closed; hence if and only if the principal ideal $\hat{u} \cdot \widehat{\mathcal{E}_w'}$ is closed in $\widehat{\mathcal{E}_w'}$.
By an approximation argument, see \cite{hormander_68}, principal ideals in
$\widehat{\mathcal{E}_w'}$ are local, i.e. the closure of 
$\hat{u} \cdot \widehat{\mathcal{E}_w'}$ in $\widehat{\mathcal{E}_w'}$ 
is equal to 
$\hat{u} \cdot \mathcal{A} \cap \widehat{\mathcal{E}_w'}$, 
with $\mathcal{A}$ 
denoting the ring of all entire functions in $C^n$.
Hence the equivalence of the surjectivity of $T_{\check{u}}$ and the
equality
$\hat{u} \cdot \mathcal{A} \cap \widehat{\mathcal{E}_w'} = \hat{u} \cdot
\widehat{\mathcal{E}_w'}$, 
which, as in the Schwartz case, holds if and only if 
$\check{u}$ is $w$-\emph{slowly decreasing}: 
for some $A > 0$ and all $\xi$ with $|\xi| > 1$,
\begin{equation}\label{slowly-decreasing}
\sup_{|\eta| \leq A w(\xi)} |\hat{u}(\xi + \eta)| \geq A^{-1} \cdot e^{-A w(\xi)}.
\end{equation}
We have thus sketched the proof of a classical fact, presently our point of departure. 
\begin{theorem}\label{theorem1}
Let $u \in \mathcal{E}_w'$.
Then $u *\mathcal{E}_w = \mathcal{E}_w$ if and only if $\hat{u}$ is $w$-slowly decreasing.
\end{theorem}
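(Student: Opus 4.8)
The plan is to turn the discussion preceding the statement into a genuine argument, which is a chain of four equivalences, only one of which carries analytic weight. First, convolution by $\check u$ is a continuous endomorphism $T_{\check u}$ of the Fr\'echet space $\mathcal{E}_w$, and its transpose is convolution by $u$ on $\mathcal{E}_w'$, carried by the Fourier--Laplace transform to multiplication by $\hat u$ on $\widehat{\mathcal{E}_w'}$. Since $\widehat{\mathcal{E}_w'}$ is an integral domain of entire functions, this transpose is injective, so $T_{\check u}$ has dense range; by the closed--range (homomorphism) theorem for Fr\'echet spaces, $T_{\check u}$ is then onto precisely when the range of the transpose is closed, i.e. precisely when the principal ideal $\hat u \cdot \widehat{\mathcal{E}_w'}$ is closed in $\widehat{\mathcal{E}_w'}$. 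Here one only needs that, for Fr\'echet spaces, closedness of a subspace in the strong and in the weak topology of the dual coincide, so there is no subtlety to dwell on.

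Second, I would invoke the approximation argument of H\"ormander \cite{hormander_68}, which goes through verbatim in the Beurling category, to the effect that principal ideals in $\widehat{\mathcal{E}_w'}$ are \emph{local}: the closure of $\hat u \cdot \widehat{\mathcal{E}_w'}$ equals $\hat u \cdot \mathcal{A} \cap \widehat{\mathcal{E}_w'}$, with $\mathcal{A}$ the ring of all entire functions on $C^n$. Thus closedness of the ideal is equivalent to the division statement: whenever $g$ is entire and $\hat u g$ obeys a bound (\ref{paley-wiener}), so does $g$.

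It remains to show this division statement is equivalent to $\hat u$ being $w$-slowly decreasing, i.e. to (\ref{slowly-decreasing}); this is the analytic core and the step I expect to be the main obstacle, being exactly where the zero geometry of $\hat u$ meets the weight $w$. For necessity, if (\ref{slowly-decreasing}) fails along a sequence $\xi_k \to \infty$, so that $|\hat u|$ is uniformly tiny on the balls $|\eta| \le A w(\xi_k)$, one builds by interpolation an entire $g$ carrying controlled mass near the $\xi_k$, for which $\hat u g \in \widehat{\mathcal{E}_w'}$ while the near-vanishing of $\hat u$ forces $g$ to outgrow every $e^{\lambda w}$, contradicting division. For sufficiency, one writes $g = (\hat u g)/\hat u$ and must bound $|g(\xi)|$ on $R^n$ by $C e^{\lambda' w(\xi)}$: since $\hat u$ has exponential type, a Cartan / minimum-modulus estimate for $\log |\hat u|$ on polydiscs of radius comparable to $w(\xi)$, combined with the lower bound furnished by (\ref{slowly-decreasing}) at some nearby point, lets the maximum principle applied to $\hat u g$ transfer that bound to $g$; the integral condition (\ref{bound}) on $w$, and its slow variation when that is assumed, serve to absorb the resulting excess exponents. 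The classical Schwartz case treated in \cite{hormander_79} is the template throughout. Finally, because every $w \in M$ is symmetric, $\hat u$ is $w$-slowly decreasing if and only if $\widehat{\check u}$ is, so the condition may indeed be stated for $\hat u$, which closes the equivalence.
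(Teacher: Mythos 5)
Your proposal follows essentially the same route as the paper's own (sketched) argument: reduce surjectivity of $T_{\check u}$ to closedness of the principal ideal $\hat u\cdot\widehat{\mathcal{E}_w'}$ via duality and dense range, localize that ideal by H\"ormander's approximation argument, and then identify the resulting division property with the $w$-slow decrease condition (\ref{slowly-decreasing}), the paper deferring this last step to the classical Schwartz-case literature exactly where you sketch the minimum-modulus and interpolation arguments. So the proposal is correct and matches the paper's approach, with your added detail on the division step being the standard classical argument the paper cites rather than reproduces.
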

The Schwartz case $w(\xi)= \log (1+ |\xi|)$ of this theorem goes back to Ehrenpreis \cite{ehrenpreis_56,ehrenpreis_60} and H\"{o}rmander \cite{hormander_62}, and extensions to other spaces are numerous, cf. e.g.
\cite{chou,cioranescu_80,cioranescu_zsido,cioranescu_zsido_2,cioranescu_zsido_arkiv}.
Distributions satisfying either of the conditions of the theorem are called, in the Ehrenpreis' tradition, $w$-invertible.
Note that $w$-invertibility is actually a property of the equivalence class of $u \in \mathcal{E}_w'$ in $\mathcal{E}_w'/\mathcal{D}_w$, since by Paley-Wiener
$|\hat{u}(\xi) + \hat{\phi}(\xi)| = |\hat{u}(\xi)| + o(e^{-\lambda w(\xi)})$
for 
$\phi \in \mathcal{D}_w$, $\lambda > 0$, and $|\xi| \rightarrow \infty$.
A distribution $u \in \mathcal{D}_w'$, not necessarily of compact support, is then naturally called $w$-invertible, if its $w$-singular support is compact and for some $\phi \in \mathcal{D}_w$ the sum $u + \phi$ is an invertible element of $\mathcal{E}_w'$; alternatively, if $\psi u \in \mathcal{E}_w'$ is $w$-invertible for some $\psi \in \mathcal{D}_w$ equal to one in a neighbourhood of the $w$-singular support of $u$.

\section{Pseudo-convolution operators} \label{operators}

Refer to \cite{hormander_pde} for pseudo-differential operators.

\subsection{Symbols} \label{symbols}

For $X \subset R^n$ open, $|\cdot|_\pi$ a semi-norm on $C^\infty(X)$, $w \in M$, and $m \in R$, define a semi-norm $|\cdot|_{m;\pi}$ on $C^\infty(X \times R^n)$ by
\begin{equation}\label{p-seminorm}
|p \,|_{m;\pi} := \sup_{\xi \in R^n} |p(\cdot, \xi)|_\pi \cdot e^{-m w(\xi)}.
\end{equation}
If $\mathcal{C}$ is a subspace of $C^\infty(X)$ constructed with semi-norms $\pi \in \Pi$,
let $\Sigma_w^m(\mathcal{C})$ be the completion of the subspace of 
$C^\infty(X \times R^n)$ 
constructed in the same way with semi-norms $|\cdot|_{m;\pi}$.
Call an element of 
$\Sigma_w^m(\mathcal{C})$ 
a symbol of order $m$ and class
$\mathcal{C}$.
Denote the union of 
$\Sigma_w^m(\mathcal{C})$ over 
$m \in R$ by $\Sigma_w(\mathcal{C})$.

In particular, recalling (\ref{wl-norm}), for $p \in \Sigma_{w_1}^m(\mathcal{E}_{w_2})$,
\begin{equation}\label{symbol-w}
|p \,|_{m; \lambda , \phi} := \sup_{\xi , \eta \in R^n} |\widehat{\phi p(\cdot, \xi)}(\eta)| \cdot e^{-m w_1(\xi) + \lambda w_2(\eta)} < \infty,
\end{equation}
for all $\lambda > 0$ and $\phi \in \mathcal{D}_{w_2}$.
In the case $w_1 = w_2 = \log (1 + |\cdot|)$, this may be equivalently written  as
\begin{equation}\label{symbol-w-schwartz}
|p \,|_{m; \alpha , K} := \sup_{\xi \in R^n, x \in K} |D^\alpha_x p(x, \xi)| \cdot (1+|x|)^{-m} < \infty,
\end{equation}
for all multi-index $\alpha \geq 0$ and $K \subset X$ compact.

Note likewise, recalling (\ref{dc-norm}), that for 
$p \in \Sigma_{w}^m(C^L)$,
for every compact $K \subset X$ there is $r>0$ such that
\begin{equation} \label{symbol-CL}
|p \,|_{m; r, K} := \sup_{\xi \in R^n, x \in K, \alpha \geq 0}
(r/L_{|\alpha|})^{|\alpha|} \cdot |D^\alpha_x p(x, \xi)| \cdot e^{-m w(\xi)} < \infty.
\end{equation}
When $C^L$ is the real analytic class, $L_k =k$, $k \geq 0$, it is easy to see that (\ref{symbol-CL}) holds if and only if the functions $p(\cdot, \xi)$, $\xi \in R^n$, have holomorphic extensions $\tilde{p}_\xi$ to a complex neighbourhood $U=U_\xi$ of $K$, and
$\sup_{z \in U, \xi \in R^n} |\tilde{p}_\xi(z)| \cdot e^{-m w(\xi)}$ is finite.

\subsection{Regular symbols of pseudo-differential operators} \label{regular-symbols}

Let as in \cite{hormander_pde}, Ch. XVII, the set $S^m$ of symbols, $m \in R$, consist of functions $p \in C^\infty(R^n \times R^n)$ such that for every $\alpha,\beta \geq 0$ there is $C_{\alpha, \beta}$ so that $|D_\xi^\alpha D_x^\beta p(x, \xi)| \leq C_{\alpha, \beta} \cdot (1 + |\xi|)^{m - |\alpha|}$ for $x, \xi \in R^n$.

Say that $p \in S^m$ is of class $C^L$ on a compact $K \subset R^n_x$ if there exist positive constants $r=r(K)$ and $R=R(K)$ such that for all $\alpha \geq 0$
\begin{equation} \label{symbol-S-CL}
\sup_{|\xi| > R} |D^\alpha_\xi p(x, \xi)|_{L,r,K} \cdot  (1 + |\xi|)^{m - |\alpha|} < \infty,
\end{equation}
with $|\cdot|_{L,r,K}$ defined by (\ref{dc-norm}).
Say that $p$ is of class $C^L$ in an open set $X$ if $p$ is of class $C^L$ on every compact in $X$.

If $C^L$ is the real analytic class, it is easy to see that (\ref{symbol-S-CL}) holds if and only if each of the functions 
$p_\xi^{(\alpha)}:= D^\alpha_\xi p(\cdot ,\xi)$,
$\alpha \geq 0$, $|\xi| > R$, 
has holomorphic extension 
$\tilde{p}_\xi^{(\alpha)}$ 
to
some complex neighbourhood $U$ of $K$, independent of $\xi$, such that
\begin{equation} \label{symbol-S-analytic}
|p|_m^{(\alpha)} = |p \,|_{m, U, R}^{(\alpha)} :=  \sup_{z \in U, |\xi| > R} |D^\alpha_\xi \tilde{p}(z, \xi)|\cdot  (1 + |\xi|)^{- m + |\alpha|} < \infty.
\end{equation}

Note that 
$S^m \subset \Sigma_w^m(\mathcal{E}_w)$ 
for $w= \log (1+|\cdot|)$, 
and then $p \in S^m$ of class $C^L$ is also of class $C^L$ as an element of
$\Sigma_w^m(\mathcal{E}_w)$.

We record the following addendum to Proposition 18.1.3 and its proof in \cite{hormander_pde}. 

\begin{proposition} \label{proposition1}
Let $p_j \in S^{m_j}$, $j= 0, 1, \ldots$, with $m_j \rightarrow - \infty$ as $j \rightarrow \infty$.
Assume $p_j$ real analytic on compact $K \subset R^n$ uniformly, that is, that for some $R > 0$ and complex neighbourhood $U$ of $K$, the holomorphic extensions of all $p_j$ satisfy (\ref{symbol-S-analytic}).
Set $m_0'= \max_{j \geq 0} m_j$.
Let $\chi \in C_0^\infty$ be equal to one in a neighbourhood of zero, let $\epsilon_j$ be a decreasing sequence of real numbers with limit zero, and put $P_j(x, \xi)= (1-\chi(\epsilon_j \xi)) \cdot p_j(x, \xi)$.
Then, provided $\epsilon_j$ approach zero rapidly enough, the symbol $p= \sum_j P_j$ belongs to $S^{m_0'}$ and it is analytic on $K$, that is, $p$ satisfies (\ref{symbol-S-analytic}) with $m=m_0'$.
\end{proposition}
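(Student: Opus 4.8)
The plan is to follow the proof of Proposition 18.1.3 in \cite{hormander_pde}, choosing the $\epsilon_j$ rapidly decreasing so that the sum converges in the usual $S^{m_0'}$ sense, and then to carry along the analytic estimates \eqref{symbol-S-analytic} as an extra bookkeeping layer. The point is that cutting off $p_j$ in $\xi$ near the origin by $1-\chi(\epsilon_j\xi)$ does not affect holomorphy in $x$: for each fixed $\xi$ the function $P_j(\cdot,\xi)$ extends holomorphically to the \emph{same} complex neighbourhood $U$ of $K$ on which $p_j(\cdot,\xi)$ does, and the extension is simply $(1-\chi(\epsilon_j\xi))\cdot\tilde p_j(\cdot,\xi)$, with the scalar factor bounded by $1$ in modulus. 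So the analytic seminorms of $P_j$ are controlled by those of $p_j$.

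First I would fix the data: the common $R>0$ and common complex neighbourhood $U$ of $K$ supplied by the hypothesis, and the constants $|p_j|_{m_j}^{(\alpha)}$ from \eqref{symbol-S-analytic}. Next, for each $j$ I would estimate the $\xi$-derivatives of the cutoff factor: $D_\xi^\alpha\big(1-\chi(\epsilon_j\xi)\big)$ is supported where $|\xi|\gtrsim 1/\epsilon_j$, and for $\alpha\neq 0$ it is $O(\epsilon_j^{|\alpha|})$ there, which on that region is $O\big((1+|\xi|)^{-|\alpha|}\big)$ times a constant depending only on $\chi$ and $\alpha$. Combining this with Leibniz in $\xi$ and the holomorphic bounds for $D_\xi^\beta\tilde p_j$, one gets, for $z\in U$ and $|\xi|>R$,
\begin{equation*}
\big|D_\xi^\alpha\big((1-\chi(\epsilon_j\xi))\tilde p_j(z,\xi)\big)\big|\le C_\alpha\,|p_j|_{m_j}^{(\alpha)}\cdot(1+|\xi|)^{m_j-|\alpha|},
\end{equation*}
with $C_\alpha$ independent of $j$. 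Crucially, on the support of $1-\chi(\epsilon_j\xi)$ we have $(1+|\xi|)^{m_j-m_{\text{fixed}}}\le (1/\epsilon_j)^{m_j-m_{\text{fixed}}}$ when $m_j<m_{\text{fixed}}$, which is the mechanism by which rapid decay of $\epsilon_j$ is converted into summability.

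Then I would choose $\epsilon_j\downarrow 0$: given the (finitely many, for each fixed $\alpha$, up to a tail) constants above, pick $\epsilon_j$ so small that for every $\alpha$ with $|\alpha|\le j$,
\begin{equation*}
C_\alpha\,|p_j|_{m_j}^{(\alpha)}\cdot\sup_{|\xi|>1/\epsilon_j}(1+|\xi|)^{m_j-m_0'}\le 2^{-j}.
\end{equation*}
This is possible since $m_j-m_0'\le 0$ and $m_j\to-\infty$, so the supremum tends to $0$ as $\epsilon_j\to 0$ whenever $m_j<m_0'$ (and only finitely many $j$ have $m_j=m_0'$, which we simply include in the sum without cutoff issues). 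With this choice, $\sum_j D_\xi^\alpha\tilde P_j(z,\xi)$ converges uniformly on $U\times\{|\xi|>R\}$ for each $\alpha$, and the sum is bounded by a constant times $(1+|\xi|)^{m_0'-|\alpha|}$; the limit $\tilde p=\sum_j\tilde P_j$ is then holomorphic in $z\in U$, uniformly in $\xi$, and satisfies \eqref{symbol-S-analytic} with $m=m_0'$. That $p=\sum_j P_j\in S^{m_0'}$ as a smooth symbol is exactly Proposition 18.1.3 of \cite{hormander_pde} with the same choice of $\epsilon_j$ (or a smaller one; we just take the minimum of the two prescriptions).

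\textbf{Main obstacle.} The only genuine subtlety is to make sure the \emph{same} complex neighbourhood $U$ and the \emph{same} cutoff threshold work simultaneously for all $j$ and for all derivatives $\alpha$ — i.e.\ the diagonal choice of $\epsilon_j$ handles each $\alpha$ only for $j\ge|\alpha|$, so one must check, as in H\"ormander's argument, that the finitely many terms with $j<|\alpha|$ are harmless (they are, being a finite sum of symbols in $S^{m_0'}$ analytic on $K$). Everything else is the routine Leibniz-and-geometric-series estimate; no new ideas beyond \cite{hormander_pde} are needed, only the observation that the $\xi$-cutoff is an $x$-independent, modulus-$\le 1$ multiplier and hence respects holomorphy in $x$.
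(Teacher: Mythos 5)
Your proposal is correct and follows essentially the same route as the paper's own proof: follow H\"ormander's Proposition 18.1.3, observe that the $x$-independent cutoff $1-\chi(\epsilon_j\xi)$ preserves holomorphy of $\tilde p_j$ on the common neighbourhood $U$, estimate $|P_j|_{m_0'}^{(\alpha)}$ by Leibniz together with the support condition $|\xi|\gtrsim 1/\epsilon_j$, and pick $\epsilon_j$ by a diagonal scheme over $|\alpha|\le j$, treating the finitely many terms with $j<|\alpha|$ or $m_j$ equal (or close) to $m_0'$ separately. Your summable threshold $2^{-j}$ is in fact what the paper's argument requires (its displayed $2^{j}$ reads as a typo), so the two proofs coincide in substance.
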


\begin{proof}
Following the proof cited, we conclude that $p \in S^{m_0'}$, and, the sum $\sum_j P_j$ being locally finite, $p$ has a holomorphic extension to $U$.
It remains to show (\ref{symbol-S-analytic}) with $m=m_0'$.
Due to $|p \,|_{m_0'}^{(\alpha)} \leq \sum_j |P_j|_{m_0'}^{(\alpha)}$, $\alpha \geq 0$, the bound follows if for every index $\alpha$ there is $j(\alpha)$ such that
\begin{equation} \label{2to-j}
|P_j|_{m_0'}^{(\alpha)} < 2^j \mbox{ \hspace{.1cm} if \hspace{.1cm}} j \geq j(\alpha).
\end{equation}
But, using the Leibniz formula, for $\alpha, j \geq 0$, $z \in U$, and $|\xi| > R$, the expression
$|D^\alpha_\xi \tilde{P}_j(z, \xi)|\cdot  (1 + |\xi|)^{- m_0' + |\alpha|}$
is estimated by (a constant $C_\alpha$ times) the maximum of the functions
$(1 + |\xi|)^{m_j - m_0'} \cdot |D_\xi^\beta (1- \chi)(\epsilon_j \xi)|$,
$0 \leq \beta \leq \alpha$.
This gives 
$|P_j|_{m_0'}^{(\alpha)}  \leq C_{\alpha, j} \cdot \epsilon_j$
if $j \geq j_0$
is so large that $m_j \leq m_0' -1$.
Choosing 
$\epsilon_j < \min_{|\alpha| \leq j} 2^j/C_{\alpha, j}$, 
we get (\ref{2to-j}) with 
$j(\alpha)= \max (j_0, |\alpha|)$.
\end{proof}

\subsection{Pseudo-convolution} \label{pseudo-convolution}

A pseudo-differential operator $p(x,D)$ in $R^n$ acts in $C_0^\infty$ by the formula
$p(x,D)\phi (x) = \int e^{ixy} p(x,y) \hat{\phi} (y) \, dy$.
If $\psi \in C_0^\infty(R^n_x)$, 
\begin{equation} \label{action}
{(\psi p(x,D) \phi)}^{\hat{}} \, (\xi) = \int \widehat{\psi p(\cdot,\eta)}(\xi - \eta) \cdot \hat{\phi} (\eta) \, d\eta,
\end{equation}
which also describes the action of $p(x.D)$ on $\mathcal{E}'$.
It is easy to see that a symbol $p \in \Sigma_w^m(\mathcal{E}_w)$, used as the kernel of an integral operator on $\hat{\mathcal{F}}_w$, will likewise define an operator
$p(x,D): \mathcal{F}_w \rightarrow \mathcal{D}_w'$.

More generally, consider the set $\Omega = \Omega(w)$ of measurable functions $a$ in $R^n \times R^n$ such that for every real $\lambda$ there is $\Lambda$ such that
$$[a]_{\lambda, \Lambda} := \sup_{\xi \in R^n} e^{-\Lambda w(\xi)} \cdot \int |a(\xi, \eta)| \, e^{\lambda w(\eta)} \, d\eta < \infty.$$
Writing then
$A_a f(\xi)= \int a(\xi, \eta) f(\eta) \, d\eta$, 
it is clear that 
$|A_a f|_{\infty, \Lambda} \leq [a]_{\lambda, \Lambda} \cdot |f|_{\infty, \lambda}$, 
and hence
$A_a: \hat{\mathcal{F}}_w^{(\lambda)} \rightarrow \hat{\mathcal{F}}_w^{(\Lambda)}$ 
if
$[a]_{\lambda, \Lambda}$ 
is finite.

For 
$p \in \Sigma_w^m(\mathcal{E}_w)$
and 
$\psi \in \mathcal{D}_w$ 
let now 
$a_{\psi p}(\xi, \eta)$ denote $\widehat{\psi p(\cdot, \eta)} (\xi - \eta)$.
\begin{lemma} \label{lemma1}
If $p \in \Sigma_w^m(\mathcal{E}_w)$ and $\psi, \psi_1 \in \mathcal{D}_w$, and $\lambda,
\Lambda$ are real numbers, then
\begin{enumerate}
  \item $[a_{\psi p}]_{\lambda, m+\lambda} \leq |p \,|_{m;\lambda, \Lambda} \cdot \int e^{(|m + \lambda| -\Lambda) w(\eta)} \, d\eta$
  \item $[a_{\psi_1 \psi p}]_{\lambda, \Lambda} \leq \|\psi_1\|_{|\Lambda|} \cdot [a_{\psi p}]_{\lambda,
  \Lambda}$.
\end{enumerate}
\end{lemma}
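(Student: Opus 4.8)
The plan is to prove the two inequalities separately, each by unwinding the definitions of $[\cdot]_{\lambda,\Lambda}$, $a_{\psi p}$, and the symbol seminorm. For part (1), I would start from
$$
[a_{\psi p}]_{\lambda, m+\lambda} = \sup_{\xi} e^{-(m+\lambda)w(\xi)} \int |\widehat{\psi p(\cdot,\eta)}(\xi-\eta)| \, e^{\lambda w(\eta)} \, d\eta.
$$
By definition (\ref{symbol-w}), $|\widehat{\psi p(\cdot,\eta)}(\zeta)| \le |p|_{m;\lambda,\Lambda} \cdot e^{m w(\eta) - \Lambda w(\zeta)}$ for the relevant $\Lambda$; here one applies this with $\zeta = \xi - \eta$. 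Substituting gives
$$
[a_{\psi p}]_{\lambda, m+\lambda} \le |p|_{m;\lambda,\Lambda} \cdot \sup_{\xi} e^{-(m+\lambda)w(\xi)} \int e^{m w(\eta) + \lambda w(\eta) - \Lambda w(\xi-\eta)} \, d\eta,
$$
and the exponent in $\eta$ is $(m+\lambda)w(\eta)$, so pulling out $e^{(m+\lambda)w(\eta)}$ and using subadditivity $w(\xi) \le w(\eta) + w(\xi-\eta)$ (equivalently $-w(\xi) \ge -w(\eta) - w(\xi-\eta)$) the factor $e^{-(m+\lambda)w(\xi)+(m+\lambda)w(\eta)}$ is dominated by $e^{|m+\lambda| w(\xi-\eta)}$. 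A change of variable $\eta \mapsto \xi - \eta$ then leaves the $\xi$-independent integral $\int e^{(|m+\lambda| - \Lambda)w(\eta)}\,d\eta$, which is the claimed bound. The one point to watch is the sign of $m+\lambda$: subadditivity only gives $|w(\xi)-w(\eta)| \le w(\xi-\eta)$ after using $w$ symmetric (assumed in \ref{beurling}), which covers both signs and accounts for the absolute value $|m+\lambda|$ in the statement.

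For part (2), the key observation is that $\psi_1 \psi p(\cdot,\eta) = \psi_1 \cdot (\psi p(\cdot,\eta))$, so on the Fourier side $\widehat{\psi_1\psi p(\cdot,\eta)} = \widehat{\psi_1} * \widehat{\psi p(\cdot,\eta)}$; that is, $a_{\psi_1\psi p}(\xi,\eta) = \int \widehat{\psi_1}(\xi - \zeta)\, a_{\psi p}(\zeta,\eta)\, d\zeta$. Plugging into the definition of $[a_{\psi_1\psi p}]_{\lambda,\Lambda}$, using Fubini, and bounding $e^{-\Lambda w(\xi)} \le e^{-\Lambda w(\zeta)} e^{|\Lambda| w(\xi-\zeta)}$ (again subadditivity plus symmetry), one separates the $\xi$-integral $\int |\widehat{\psi_1}(\xi-\zeta)| e^{|\Lambda| w(\xi-\zeta)}\,d\xi = \|\psi_1\|^w_{|\Lambda|}$ from the remaining $\sup_\zeta e^{-\Lambda w(\zeta)}\int |a_{\psi p}(\zeta,\eta)| e^{\lambda w(\eta)}\,d\eta = [a_{\psi p}]_{\lambda,\Lambda}$, giving the product bound.

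I expect the main obstacle to be purely bookkeeping: getting the subadditivity estimates to land with exactly the constants and absolute values appearing in the statement, and making sure the integral $\int e^{(|m+\lambda|-\Lambda)w(\eta)}\,d\eta$ is finite — which is where the hypothesis that $p \in \Sigma_w^m(\mathcal{E}_w)$ supplies, for each $\lambda$, some $\Lambda$ large enough (together with the lower bound $w(\xi) \succ \log(1+|\xi|)$ from the definition of $M$, which forces $e^{-cw(\eta)}$ to be integrable for $c$ large). No deep input is needed beyond subadditivity, symmetry of $w$, Fubini, and the definitions; the content is entirely in choosing the exponents correctly.
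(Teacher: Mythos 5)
Your proposal is correct and follows essentially the same route as the paper: part (1) is exactly the unwinding of the seminorm definition (\ref{symbol-w}) together with subadditivity and symmetry of $w$, and part (2) rests on the same convolution identity $a_{\psi_1 \psi p}(\xi,\eta)=\int \hat{\psi_1}(\xi-\eta')\,a_{\psi p}(\eta',\eta)\,d\eta'$ that the paper's proof cites, combined with Fubini and the bound $e^{-\Lambda w(\xi)}\le e^{-\Lambda w(\eta')}e^{|\Lambda| w(\xi-\eta')}$. The only nit is a wording slip in part (2): at fixed $\xi$ the separated integral is over the variable $\eta'$ (equivalently over $\xi-\eta'$), not over $\xi$, but its value is $\|\psi_1\|^w_{|\Lambda|}$ either way, so the argument stands.
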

\begin{proof}
The first estimate follows by (\ref{symbol-w}).
The second follows by direct computation, observing that
$a_{\psi_1 \psi p} (\xi, \eta) = \int \hat{\psi_1}(\xi - \eta') a_{\psi p}(\eta', \eta)
\, d\eta'$. 
\end{proof} 
It thus follows that for any real $\lambda$ the operator $p(x,D)$ maps
$\mathcal{F}_w^{(\lambda)}$ to $(\mathcal{F}_w^{(\lambda + m)})_{loc}$.
In particular, smooth functions in $\mathcal{F}_w$ are mapped to smooth functions.

\subsection{Elliptic analytic pseudo-differential operators} \label{pseudo-differential}

A symbol $p \in S^m$ and the associated operator $p(x,D)$ are said to be elliptic in a compact set $K \subset R^n$ if for some constants $c > 0$, $C \geq 0$, and some open neighbourhood $X$ of $K$
\begin{equation}\label{elliptic}
|p(x,\xi)| \geq c \cdot |\xi|^m  \mbox{ for } x \in X \mbox{ and } |\xi| > C.
\end{equation}
The operator $p(x,D)$ is said to be analytic on $K$ if $p$ is analytic on $K$ as a symbol.
Recall a basis fact.
\begin{theorem}\label{elliptic-inverse}
If $p \in S^m$ is elliptic and analytic on a compact $K$, then there is $q \in  S^{-m}$
analytic on $K$ such that $q(x,D)$ is the left inverse of $p(x,D)$ on Schwartz
distributions $\mathcal{S}'$ of tempered growth modulo smoothing operators.
\end{theorem}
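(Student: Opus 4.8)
The plan is to follow the classical parametrix construction for elliptic pseudo-differential operators, as in \cite{hormander_pde}, Ch.~XVIII, but keeping track of analyticity on $K$ at every stage, using Proposition~\ref{proposition1} to carry out the summation of the asymptotic series while preserving the uniform analytic bounds \eqref{symbol-S-analytic}. First I would note that ellipticity \eqref{elliptic} together with analyticity of $p$ on $K$ implies, by shrinking the complex neighbourhood $U$ of $K$ and enlarging $C$, that the holomorphic extension $\tilde p(z,\xi)$ satisfies $|\tilde p(z,\xi)| \geq (c/2)\cdot|\xi|^m$ for $z \in U$ and $|\xi| > C$; hence $q_0 := 1/p$ extends holomorphically in $z$ on $U$ for $|\xi|>C$, and a Cauchy-estimate argument on polydiscs inside $U$ gives that $q_0$ satisfies \eqref{symbol-S-analytic} with order $-m$, i.e.\ $q_0$ is an analytic symbol of class $S^{-m}$ on $K$ (after multiplying by a cutoff in $\xi$ supported in $|\xi|>C$).

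Next I would set up the usual recursion. Writing $\#$ for the symbol of the composition $q(x,D)\,p(x,D)$, one has the asymptotic expansion $(q\# p)(x,\xi) \sim \sum_\alpha \frac{1}{\alpha!} \partial_\xi^\alpha q(x,\xi)\, D_x^\alpha p(x,\xi)$, and one solves $q \# p \sim 1$ by choosing $q \sim \sum_{j\geq 0} q_j$ with $q_j \in S^{-m-j}$ defined recursively: $q_0 = 1/p$ as above, and
\begin{equation*}
q_j(x,\xi) = -\,q_0(x,\xi)\cdot \sum_{k<j}\ \sum_{|\alpha|=j-k} \frac{1}{\alpha!}\,\partial_\xi^\alpha q_k(x,\xi)\, D_x^\alpha p(x,\xi).
\end{equation*}
The point is that each $q_j$ is obtained from $q_0$ and the $q_k$ with $k<j$ by finitely many operations — multiplication, $x$-differentiation, $\xi$-differentiation — each of which preserves the analytic symbol bounds \eqref{symbol-S-analytic} on $K$ (differentiation in $x$ by Cauchy's formula on $U$, differentiation in $\xi$ and multiplication directly). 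So by induction every $q_j$ is an analytic symbol of class $S^{-m-j}$ on $K$, with holomorphic extensions to a common neighbourhood of $K$ — one does need to shrink the neighbourhood at each step, but since one only controls finitely many $q_j$ at a time against a fixed target bound this is exactly the situation handled by Proposition~\ref{proposition1}, whose statement already allows the neighbourhood $U$ to be fixed once the $p_j$ (here the $q_j$) are all holomorphic there.

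Then I would invoke Proposition~\ref{proposition1} directly: with $p_j := q_j \in S^{m_j}$, $m_j = -m-j \to -\infty$, all uniformly analytic on $K$ on a common $U$, the Proposition produces $q := \sum_j (1-\chi(\epsilon_j\xi))\,q_j \in S^{-m}$ which is analytic on $K$ (satisfies \eqref{symbol-S-analytic} with order $-m$), provided the $\epsilon_j$ go to zero fast enough. By construction $q$ is a genuine symbol whose composition $q(x,D)p(x,D)$ has symbol $\sim \sum_j q_j \# p$; the telescoping of the recursion gives $q \# p - 1 \in \bigcap_N S^{-N}$, i.e.\ $q(x,D)p(x,D) = I - S$ with $S$ a smoothing operator (symbol in $S^{-\infty}$). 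This is precisely the assertion: $q(x,D)$ is a left inverse of $p(x,D)$ on $\mathcal{S}'$ modulo smoothing operators, and $q \in S^{-m}$ is analytic on $K$.

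The main obstacle, and the only place any real care is needed, is the bookkeeping of the complex neighbourhoods: a priori each application of a Cauchy estimate in $x$ forces one to pass to a strictly smaller polydisc around $K$, so the naive recursion gives $q_j$ holomorphic only on a neighbourhood $U_j$ shrinking with $j$, with no common domain. The resolution is standard but must be stated: one fixes from the outset a slightly larger compact $K' \supset K$ on which $p$ is still analytic, works with holomorphic extensions to a fixed $U \supset K'$, and absorbs the shrinkage into the $K'$-to-$K$ gap, so that all $q_j$ are holomorphic on one fixed neighbourhood of $K$ with bounds of the form \eqref{symbol-S-analytic}; after that Proposition~\ref{proposition1} applies verbatim. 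Everything else — convergence of the $q_j$ sum in the $S^{-m}$ topology, and the identification of $q\#p - 1$ as smoothing — is the classical computation from \cite{hormander_pde} and requires no modification.
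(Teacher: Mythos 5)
Your proposal is correct and takes essentially the same route as the paper: the paper's proof simply cites H\"ormander's Theorem 18.1.9, noting that the step producing $1/p$ (composition with $F(z)=1/z$, your $q_0$ and Cauchy-estimate argument) preserves analyticity on $K$, and that the asymptotic summation step preserves it by Proposition \ref{proposition1}, exactly the two points you spell out with the explicit recursion for the $q_j$.
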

The proof of Theorem 18.1.9 in \cite{hormander_pde} applies, but noting that the parametrix
there constructed is now analytic on $K$.
Indeed, the step $(iv) \Rightarrow (iii)$ there preserves analyticity on $K$, involving
composition with $F(z)=1/z$ holomorphic in $|z|> c$, and the step $(iii) \Rightarrow
(ii)$ preserves analyticity by Proposition \ref{proposition1}.

\section{Coercing invertibility} \label{coertion}

Write generically $\Phi$ for a space of smooth functions in $R^n$ with dual $\Phi'$ consisting of (ultra-) distributions with compact support.
Following Ehrenpreis \cite{ehrenpreis_56,ehrenpreis_60}, one says that $u \in \Phi'$ is $\Phi$-invertible if $u$ acts surjectively in $\Phi$ via convolution.
Let $\pi: \Phi' \rightarrow \Phi'$ and let $\Phi' \subset \Phi_1'$.
We say that $\pi$ is $(\Phi, \Phi_1)$-coercive if $u \in \Phi'$ and $\pi u$ $\Phi$-invertible implies $u$ $\Phi_1$-invertible; write $\Phi$-coercive for $(\Phi, \Phi)$-coercive.
(But we avoid such formal articulation, when the spaces are clear from the context.)

\subsection{Sufficient conditions} \label{sufficient-conditions}

The coercivity of 
$p(x,D)$, $p \in \Sigma_{w}^m(\mathcal{C})$, 
follows, cf. Theorem \ref{theorem1}, from estimates of the Fourier transform of $p(x,D)u$ by that of $u$.
Recall from Section \ref{pseudo-convolution} the definition of the kernel set $\Omega$.
\begin{lemma}\label{lemma2}
Let $w \prec w'$ be slowly varying functions in $M$.
Let 
$\mathcal{G} \subset \Omega=\Omega(w)$ 
be bounded in the sense that for every $\lambda$ there is $\Lambda$ such that 
$\sup_{a \in \mathcal{G}} [a]_{\lambda, \Lambda}$
is finite.
Suppose that for some real $\lambda_0$ and all $\lambda > 0$ there is $\rho > 0$ such
that
\begin{equation}\label{lemma2-estimate}
\inf_{a \in \mathcal{G}} \int_{|\eta - \xi| > \rho w'(\xi)} |a(\xi, \eta)| \cdot
e^{\lambda_0 w(\eta)} \, d\eta = o(e^{-\lambda w(\xi)})
\end{equation}
as $|\xi| \rightarrow \infty$.
Then (still writing 
$A_a f(\xi)$ for $\int a(\xi, \eta) \, f(\eta) \, d\eta$), if $f \in \hat{\mathcal{F}}_w$ and the function $\inf_{a \in \mathcal{G}} |A_af|$ is $w$-slowly decreasing, then $f$ is $w'$-slowly decreasing.
\end{lemma}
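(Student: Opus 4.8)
The plan is to show that $w'$-slow decrease of $f$ follows from $w$-slow decrease of $g(\xi) := \inf_{a \in \mathcal{G}} |A_a f(\xi)|$ by a localization argument: the integral defining $A_a f(\xi)$ is, up to a rapidly decaying error controlled by hypothesis \eqref{lemma2-estimate}, an integral over the ball $|\eta - \xi| \leq \rho w'(\xi)$, on which $f$ can only be as large as its supremum over that ball; since $w'$ is slowly varying and $\rho w'(\xi) = o(|\xi|)$, that supremum is comparable (in the $\succ$ sense, after exponentiation) to $|f|$ evaluated on a slightly enlarged ball, which is what $w'$-slow decrease asks us to bound from below.

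First I would fix $\lambda > 0$; the goal \eqref{slowly-decreasing} for $f$ with weight $w'$ is to produce $A>0$ with $\sup_{|\eta'| \le A w'(\xi)} |f(\xi + \eta')| \ge A^{-1} e^{-Aw'(\xi)}$ for $|\xi|$ large. From the hypothesis that $g$ is $w$-slowly decreasing, there is $A_0$ with $g(\xi) \ge A_0^{-1} e^{-A_0 w(\xi)}$ on a sequence where the sup over $|\eta| \le A_0 w(\xi)$ is attained; more precisely I would unwind the definition of $w$-slow decrease for $g$ to get, for each large $\xi$, a point $\xi^* $ with $|\xi^* - \xi| \le A_0 w(\xi)$ and $g(\xi^*) \ge A_0^{-1} e^{-A_0 w(\xi)}$. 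Then pick $a \in \mathcal{G}$ nearly realizing the infimum at $\xi^*$, so $|A_a f(\xi^*)|$ is bounded below by, say, $\tfrac12 A_0^{-1} e^{-A_0 w(\xi)}$.

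Next I would split $A_a f(\xi^*) = \int_{|\eta - \xi^*| \le \rho w'(\xi^*)} + \int_{|\eta - \xi^*| > \rho w'(\xi^*)}$. The tail is estimated using \eqref{lemma2-estimate}: since $f \in \hat{\mathcal{F}}_w^{(\lambda_0)}$ for some $\lambda_0$, we have $|f(\eta)| \le C e^{-\lambda_0 w(\eta)}$ when $\lambda_0 \le 0$, or absorb the sign by the standing $\lambda_0$ in the statement — in any case the tail is $o(e^{-\lambda w(\xi^*)})$ by choosing $\rho = \rho(\lambda)$ from the hypothesis, and $w(\xi^*) \sim w(\xi)$ because $|\xi^* - \xi| = O(w(\xi)) = o(|\xi|)$ and $w$ is slowly varying. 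Hence for large $\xi$ the tail is strictly smaller than half the lower bound on $|A_a f(\xi^*)|$, so the ball contribution is itself $\ge \tfrac14 A_0^{-1} e^{-A_0 w(\xi)}$ in absolute value. Bounding that ball integral by $\big(\sup_{|\eta - \xi^*| \le \rho w'(\xi^*)} |f(\eta)|\big) \cdot \int |a(\xi^*, \eta)| \, d\eta$ and using boundedness of $\mathcal{G}$ (the quantity $[a]_{\lambda',\Lambda'}$ controls $\int |a(\xi^*,\cdot)|$ after trivially bounding $e^{\lambda' w(\eta)} \ge 1$, times $e^{\Lambda' w(\xi^*)}$), we get
\[
\sup_{|\eta - \xi^*| \le \rho w'(\xi^*)} |f(\eta)| \ \ge \ c \, e^{-A_0 w(\xi) - \Lambda' w(\xi^*)} \ \ge \ c' e^{-C' w(\xi)}.
\]
Finally, every $\eta$ with $|\eta - \xi^*| \le \rho w'(\xi^*)$ satisfies $|\eta - \xi| \le A_0 w(\xi) + \rho w'(\xi^*) \le A w'(\xi)$ for suitable $A$, again using $w \prec w'$, slow variation of $w'$, and $w(\xi^*) \sim w(\xi)$, $w'(\xi^*) \sim w'(\xi)$; and since $w \prec w'$, the exponent $-C' w(\xi) \ge -A w'(\xi) - A$. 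Relabeling $\eta = \xi + \eta'$ gives \eqref{slowly-decreasing} for $f$ with weight $w'$.

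The main obstacle is the bookkeeping of the three nested perturbations of $\xi$ — passing from $\xi$ to $\xi^*$ (the point where $g$ is large), then to $\eta$ (the point in the ball where $f$ is large) — while keeping all the weights $w(\cdot)$ and $w'(\cdot)$ comparable throughout; this is exactly where slow variation of both $w$ and $w'$ is used, and one must check that each displacement is indeed $o(|\xi|)$ so that \eqref{slowly-varying} applies. A secondary technical point is the interplay of signs of $\lambda_0$ and the membership $f \in \hat{\mathcal{F}}_w^{(\lambda_0)}$ in bounding the tail via \eqref{lemma2-estimate}; I would handle this by noting $f \in \hat{\mathcal{F}}_w$ means $f \in \hat{\mathcal{F}}_w^{(\mu)}$ for some real $\mu$, and the hypothesis \eqref{lemma2-estimate} with its own fixed $\lambda_0$ is stated so as to absorb whatever weight growth $|f|$ contributes, so after multiplying and dividing by $e^{\lambda_0 w(\eta)}$ the tail integral is literally $\int_{|\eta-\xi^*|>\rho w'(\xi^*)} |a(\xi^*,\eta)| e^{\lambda_0 w(\eta)} \cdot |f(\eta)| e^{-\lambda_0 w(\eta)}\, d\eta$, and $|f(\eta)| e^{-\lambda_0 w(\eta)}$ is bounded once we pick $\lambda_0$ large enough relative to $\mu$.
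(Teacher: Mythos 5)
Your overall strategy is the paper's: localize $A_af(\xi)$ to the ball $|\eta-\xi|\le\rho w'(\xi)$, control the tail by \eqref{lemma2-estimate}, bound the ball part by $\sup|f|$ times $[a]_{0,\Lambda}e^{\Lambda w(\xi)}$, and then use slow variation of $w,w'$ together with $w'\succ w$ to convert $w$-slow decrease of $\inf_a|A_af|$ into $w'$-slow decrease of $f$ (the paper does this by taking suprema of its main estimate over $B(\xi,Aw(\xi))$, you do it pointwise at a near-maximizer $\xi^*$ — that difference is cosmetic). But there is a genuine gap at exactly the point you defer to the end: the tail estimate. Hypothesis \eqref{lemma2-estimate} controls $\int_{|\eta-\xi|>\rho w'(\xi)}|a(\xi,\eta)|e^{\lambda_0 w(\eta)}\,d\eta$ for one fixed $\lambda_0$, given \emph{before} $f$; the function $f$ is an arbitrary element of $\hat{\mathcal{F}}_w=\bigcup_\mu\hat{\mathcal{F}}_w^{(\mu)}$, so all you know is $|f(\eta)|\le Ce^{\lambda' w(\eta)}$ for some $\lambda'=\lambda'(f)$ that may well exceed $\lambda_0$. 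Your proposed repair, ``pick $\lambda_0$ large enough relative to $\mu$,'' reverses the quantifiers of the lemma and is not available; when $\lambda'>\lambda_0$ the factor $|f(\eta)|e^{-\lambda_0w(\eta)}$ is unbounded and your tail bound has no justification. The paper closes this gap with an interpolation step that is the real technical content of the proof: by Cauchy--Schwarz,
\begin{equation*}
\Bigl(\int_{E_2}|a(\xi,\eta)|e^{\lambda' w(\eta)}\,d\eta\Bigr)^2\le
\int_{R^n}|a(\xi,\eta)|e^{(2\lambda'-\lambda_0)w(\eta)}\,d\eta\cdot
\int_{E_2}|a(\xi,\eta)|e^{\lambda_0 w(\eta)}\,d\eta,
\end{equation*}
where the first factor is $O(e^{\Lambda' w(\xi)})$ uniformly on $\mathcal{G}$ by the boundedness assumption, and the second is $o(e^{-\lambda w(\xi)})$ by \eqref{lemma2-estimate}; since $\lambda$ is arbitrary, the loss of the factor $e^{\Lambda' w(\xi)/2}$ and the halving of the exponent are harmless. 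Some such interpolation (or an equivalent device) must be added to your argument; without it the lemma as stated is not proved.

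A secondary, easily repaired slip: you choose $a\in\mathcal{G}$ to nearly realize $\inf_a|A_af(\xi^*)|$, but that is the wrong infimum to optimize. The lower bound $|A_af(\xi^*)|\ge\inf_{a'}|A_{a'}f(\xi^*)|$ holds for \emph{every} $a$, whereas the smallness of the tail is only guaranteed, via \eqref{lemma2-estimate}, for an $a$ nearly attaining the infimum of the tail integrals. So the selection should be made to nearly realize the infimum in \eqref{lemma2-estimate} (at $\xi^*$, and with the two quantities compared after adjusting $w(\xi^*)$ versus $w(\xi)$ by slow variation); as written, the tail for your chosen $a$ need not be small.
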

\begin{proof}
For $a \in \Omega$, $\rho > 0$, and $\lambda', \Lambda$ real, estimate the integral
$A_af(\xi)$ by the sum of the integrals of $|a(\xi, \eta) f(\eta)|$ over two regions,
$E_1 := \{\eta: |\eta - \xi| \leq \rho w'(\xi)\}$ and its complement $E_2$.
Bound the first integral by
$$[a]_{0, \Lambda} \cdot e^{\Lambda w(\xi)} \cdot \sup_{|\eta|
\leq \rho w'(\xi)} |f(\xi + \eta)|.$$
Bound the second integral by
$$[a]_{\infty, -\lambda'} \cdot \int_{E_2} |a(\xi, \eta)| \cdot e^{\lambda' w(\eta)} \, d\eta,$$
and use the Cauchy-Schwartz inequality to bound the square of the integral in the last term by the product of
$$\int_{R^n} |a(\xi, \eta)| \cdot e^{(2 \lambda' - \lambda_0) w(\eta)} \, d\eta$$
and
$$\int_{|\eta - \xi| > \rho w'(\xi)}|a(\xi, \eta)| \cdot e^{\lambda_0 w(\eta)} \,
d\eta.$$
The second last integral in bounded by 
$[a]_{2\lambda' -\lambda_0, \Lambda'} \cdot e^{\Lambda' w(\eta)}$
for any $\Lambda'$.
Choose now $\lambda'$ so that $[a]_{\infty, -\lambda'}$ is finite, and choose $\Lambda, \Lambda'$ so that $[a]_{0, \Lambda}$ and $[a]_{2 \lambda' - \lambda_0, \Lambda'}$ are bounded if $a \in \mathcal{G}$.
Putting it all together, we obtain
\begin{equation}\label{main-estimate}
\inf_{a \in \mathcal{G}} |A_af(\xi)| \leq C \, e^{C w(\xi)} \, \left(\sup_{|\eta| \leq \rho w'(\xi)} |f(\xi + \eta)| + o(e^{-\lambda w(\xi)})\right) \mbox{\hspace{.4cm}} (\forall \lambda \exists \rho),
\end{equation}
with a constant $C$ depending on $f$ and $\mathcal{G}$ only.
Take the maximum of both sides of (\ref{main-estimate}) over a ball $B(\xi, A w(\xi))$,
$A > 0$, recalling that the left side is $w$-slowly decreasing, and that $w$ and $w'$ are
slowly varying; we get
$$C_1^{-1} e^{C_1 w(\xi)} \leq \sup_{|\eta| \leq C_1 w(\xi) + \rho C_1
w'(\xi)} |f(\xi + \eta)| + o(e^{-\lambda w(\xi)})  \mbox{\hspace{.3cm}} (\exists C_1
\forall \lambda \exists \rho).$$
Since $w' \succ w$, this obviously implies $w'$-slow decrease of $f$.
\end{proof} 

Before stating the main result, introduce for brevity of statement the following conditions; notice that each of these conditions implies that $w' \succ w$.

\begin{description}
  \item[(*)] Given $w,w' \in M$, $p \in \Sigma_w(C^L)$, suppose that for all $b>0$  there exist $a,R >0$, such that
\begin{equation}\label{*}
  q_L(aw'(\xi)) \geq b w(\xi) \ \mbox{ if } \ |\xi| > R.
\end{equation}
  \item[(**)] Given $w,w' \in M$, $p \in \Sigma_w(\mathcal{E}_\gamma)$ with $\gamma \in M$ of form $\gamma(\xi)=\Gamma(|\xi|)$ and $\Gamma$ non-decreasing, suppose 
\begin{equation}\label{**}
  \Gamma \circ w' \succ w.
\end{equation}
\end{description}

\begin{theorem} \label{thm-main}
Let $w, w' \in M$ be slowly varying, $u \in \mathcal{F}_w$, and $p \in
\Sigma_w^m(\mathcal{E}_w)$.
Let $U$ be an open neighbourhood of the $w$-singular support of $p(x,D)u$, assumed
compact, with $\bar{U}$ compact.
Assume $p(x,D)u$ $w$-invertible, that is, that the Fourier transform of $\psi p(x,D) u$ is $w$-slowly decreasing when $\psi \in \mathcal{D}_w$ is equal to one in $U$.
Then either of the conditions (*), (**), implies that $\hat{u}$ is $w'$-slowly decreasing; hence, if the $w'$-singular support of $u$ is compact, then $u$ is $w'$-invertible.
\end{theorem}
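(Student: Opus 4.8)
The plan is to reduce Theorem \ref{thm-main} to an application of Lemma \ref{lemma2}, with the family $\mathcal{G}$ built from the kernel $a_{\psi p}$ of the operator $\psi p(x,D)$. Write $v := p(x,D)u$, and fix $\psi \in \mathcal{D}_w$ equal to one on $U$ so that $\hat{\psi v}$ is $w$-slowly decreasing by hypothesis. Recalling (\ref{action}), the Fourier transform of $\psi p(x,D) u$ is $A_{a_{\psi p}} \hat{u}$, where $a_{\psi p}(\xi,\eta) = \widehat{\psi p(\cdot,\eta)}(\xi-\eta)$. By Lemma \ref{lemma1}, $a_{\psi p} \in \Omega(w)$ and the relevant bracket norms are finite, so $\{a_{\psi p}\}$ is a bounded family in the sense of Lemma \ref{lemma2} (a singleton family, so the $\inf$ over $\mathcal{G}$ is trivial). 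Since $u \in \mathcal{F}_w$, we have $\hat{u} = f \in \hat{\mathcal{F}}_w$, and $|A_{a_{\psi p}} f| = |\hat{\psi v}|$ is $w$-slowly decreasing. Thus the conclusion of Lemma \ref{lemma2} — that $f = \hat u$ is $w'$-slowly decreasing — will follow once we verify the decay hypothesis (\ref{lemma2-estimate}) for $a_{\psi p}$; and then, if the $w'$-singular support of $u$ is compact, $w'$-slow decrease of $\hat u$ is exactly $w'$-invertibility (after multiplying by a $\mathcal{D}_{w'}$-cutoff, using Theorem \ref{theorem1} and the remark that $w'$-invertibility depends only on the class mod $\mathcal{D}_{w'}$).

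So the heart of the proof is the estimate (\ref{lemma2-estimate}): for every $\lambda > 0$ there is $\rho > 0$ with
\begin{equation*}
\int_{|\eta - \xi| > \rho w'(\xi)} |\widehat{\psi p(\cdot,\eta)}(\xi - \eta)| \cdot e^{\lambda_0 w(\eta)} \, d\eta = o(e^{-\lambda w(\xi)}), \quad |\xi| \to \infty,
\end{equation*}
for some fixed real $\lambda_0$. This is where the regularity of the symbol $p$ enters, and where the two alternative hypotheses (*) and (**) do their work. In the analytic/Denjoy-Carleman case (*), $p \in \Sigma_w(C^L)$ means $p(\cdot,\eta)$ is $C^L$ on compacts uniformly in $\eta$ (up to the factor $e^{m w(\eta)}$), so on the support of $\psi$ one controls all derivatives $D_x^\alpha(\psi p(\cdot,\eta))$ by $(L_{|\alpha|}/r)^{|\alpha|} \cdot C e^{m w(\eta)}$; integrating by parts $|\alpha|$ times in the Fourier integral gives $|\widehat{\psi p(\cdot,\eta)}(\zeta)| \le C e^{m w(\eta)} (L_{|\alpha|}/(r|\zeta|))^{|\alpha|}$, and optimizing over $\alpha$ yields a bound $C e^{m w(\eta)} e^{-q_L(r|\zeta|)}$ with $q_L$ as in Section \ref{d-c}. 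On the region $|\zeta| = |\xi - \eta| > \rho w'(\xi)$ this produces the factor $e^{-q_L(r\rho w'(\xi))}$, and condition (\ref{*}) — namely $q_L(a w'(\xi)) \ge b w(\xi)$ for $a$ suitably chosen relative to $\rho$ — converts this into $e^{-b w(\xi)}$ with $b$ as large as we please; absorbing the $e^{\lambda_0 w(\eta)}$ factor and the $e^{m w(\eta)}$ using the integrability bound (\ref{bound}) on $w$ (and $w \prec w'$) and the sub-additivity of $w$ to compare $w(\eta)$ with $w(\xi) + w(\xi - \eta)$, one gets the required $o(e^{-\lambda w(\xi)})$. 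In the Beurling case (**), $p \in \Sigma_w(\mathcal{E}_\gamma)$ gives directly, by definition (\ref{symbol-w}), a bound $|\widehat{\psi p(\cdot,\eta)}(\zeta)| \le C e^{m w(\eta)} e^{-\mu \gamma(\zeta)}$ for every $\mu > 0$; on $|\zeta| > \rho w'(\xi)$ this is $\le C e^{m w(\eta)} e^{-\mu \Gamma(\rho w'(\xi))}$ (using $\Gamma$ non-decreasing), and $\Gamma \circ w' \succ w$ makes $\mu \Gamma(\rho w'(\xi)) \ge \lambda w(\xi)$ for $\mu$ large, again after the same bookkeeping with (\ref{bound}) and sub-additivity.

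I expect the main obstacle to be precisely this bookkeeping in the tail integral: one must handle the $e^{\lambda_0 w(\eta)}$ weight uniformly over $\eta$ ranging over all of $R^n$ (not just near $\xi$), splitting according to whether $|\eta| \lesssim |\xi|$ or $|\eta| \gg |\xi|$, and in the latter regime one uses that $w(\xi - \eta) \sim w(\eta)$ so the gain $e^{-q_L(r|\xi-\eta|)}$ or $e^{-\mu\Gamma(|\xi-\eta|)}$ already beats any polynomial-in-$w(\eta)$ weight by (\ref{bound}), securing convergence of the $d\eta$ integral, while in the former regime sub-additivity $w(\eta) \le w(\xi) + w(\xi - \eta)$ lets one trade the $e^{\lambda_0 w(\eta)}$ for $e^{\lambda_0 w(\xi)}$ times a convergent factor, the surplus $e^{\lambda_0 w(\xi)}$ being harmless since we may take $\lambda$ (hence $b$ or $\mu$) arbitrarily large in (*) resp. (**). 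A secondary point requiring care is that the estimate must be uniform as $\psi$ ranges over a fixed choice of cutoff — but since $\psi$ is fixed once and for all, this is not an issue; and the slow variation of $w, w'$ is needed only at the very end, inside Lemma \ref{lemma2} itself, to pass from the pointwise bound to slow decrease. Finally, the passage from "$\hat u$ is $w'$-slowly decreasing" to "$u$ is $w'$-invertible" uses the compactness of the $w'$-singular support together with the alternative definition of $w'$-invertibility for distributions in $\mathcal{D}_{w'}'$ recorded after Theorem \ref{theorem1}: pick $\psi' \in \mathcal{D}_{w'}$ equal to one near that singular support, note $\psi' u \in \mathcal{E}_{w'}'$ differs from $u$ by something smooth, and conclude via Theorem \ref{theorem1}.
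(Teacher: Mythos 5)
Your reduction of the theorem to Lemma \ref{lemma2}, and your treatment of case (**) with the singleton family $\mathcal{G}=\{a_{\psi p}\}$, do follow the paper's route. The genuine gap is in case (*). You claim that on the support of $\psi$ the derivatives $D_x^\alpha(\psi p(\cdot,\eta))$ obey a bound of the form $(L_{|\alpha|}/r)^{|\alpha|}\cdot C e^{m w(\eta)}$, hence that $|\widehat{\psi p(\cdot,\eta)}(\zeta)|\le C e^{m w(\eta)} e^{-q_L(r|\zeta|)}$. This is false for a fixed cutoff $\psi\in\mathcal{D}_w$: by Leibniz the derivatives of $\psi$ enter, and $\psi$ is not of class $C^L$; in the central case where $C^L$ is the real analytic class (exactly the case needed for the Remark and for Theorems \ref{analytic-multiplier} and \ref{thm-pdo}) no nontrivial compactly supported function is of class $C^L$, the class being quasi-analytic. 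Indeed, taking $p\equiv 1$ your claimed bound would say $\hat{\psi}(\zeta)=O(e^{-q_L(r|\zeta|)})$, i.e.\ that $\psi$ is a compactly supported $C^L$ function --- a contradiction. The point you dismissed as ``secondary'' (uniformity over a varying cutoff) is precisely the crux, and it is the reason Lemma \ref{lemma2} is stated for an infimum over a bounded family $\mathcal{G}$ rather than for a single kernel.

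The paper's proof of case (*) takes $\mathcal{G}=\{a_{\chi_N p}:N\ge 0\}$, where the $\chi_N\in\mathcal{D}_w(U')$ are Ehrenpreis--H\"{o}rmander cutoffs satisfying $|D^\alpha\chi_N|\le (CN)^{|\alpha|}$ only for $|\alpha|\le N$, see (\ref{ehrenpreis-unit}). For each $N$ one integrates by parts at most $N$ times, obtaining $|\widehat{\chi_N p(\cdot,\eta)}(\zeta)|\le C\,(L_N/(r|\zeta|))^N e^{m w(\eta)}$, and only the infimum over $N$ produces the factor $e^{-q_L(r\rho w'(\xi))}$ in the tail region $|\zeta|>\rho w'(\xi)$, which condition (*) then converts into $o(e^{-\lambda w(\xi)})$. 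Working with a family creates two obligations your argument never meets: the norms $[a_{\chi_N p}]_{\lambda,\Lambda}$ must be bounded uniformly in $N$ (the paper gets this from Lemma \ref{lemma1} together with $\|\chi_N\|^w_\lambda\le\|\Phi\|^w_\lambda$ for the convolution-built $\chi_N$), and, more importantly, one must show that $\inf_N |A_N\hat{u}|$ is still $w$-slowly decreasing although the hypothesis only concerns $\widehat{\psi p(x,D)u}$; this is the paper's STEP 2, done via Lemma \ref{sublemma}, which compares $\widehat{\chi_N v}$ with $\hat{v}$ up to an error $o(e^{-\lambda w(\xi)})$ uniform in $N$. Your case (**), where the fixed $\psi$ genuinely suffices because the symbol's $x$-regularity is non-quasi-analytic, and your final passage from $w'$-slow decrease of $\hat{u}$ to $w'$-invertibility, are correct and agree with the paper.
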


\noindent \emph{Remark.}  When $C^L$ is the real analytic class, the condition (*) is
satisfied if $w' \succ w$.
The conclusion of Theorem \ref{thm-main} thus holds with $w' \sim w$ in this case.

\begin{proof}
The condition (*) implies $w'$-slow decrease of $\hat{u}$ by Lemma \ref{lemma2} with 
$$\mathcal{G} = \{a_{\chi_N p}: N=0, 1, \ldots\},$$ 
where the functions 
$\chi_N \in \mathcal{D}_w(U')$, $U' \subset\subset U$, 
are all equal to one in a fixed neighbourhood of the $w$-singular support of $p(x,D)u$, and satisfy
\begin{equation}\label{ehrenpreis-unit}
|D^\alpha \chi_N| \leq (C N)^{|\alpha|} \mbox{ if } |\alpha| \leq N, \mbox{\hspace{.2cm}}
N= 0, 1, \ldots .
\end{equation}
The existence of such functions is a standard fact, see e.g. Theorem 1.4.2 in
\cite{hormander_pde}.
Recall that they may take the form 
$\chi_N= \Phi * \phi_{(N)} * \ldots * \phi_{(N)}$ 
($N$-fold convolution), where $\Phi \in \mathcal{D}_w(U')$ is one in a neighbourhood of the $w$-singular support of $p(x,D)u$, the function $\phi \in C_0^\infty$ is non-negative with integral one and support sufficiently close to zero, and $\phi_{(N)}(x)=N^n \phi(Nx)$.

We verify that the assumptions of Lemma \ref{lemma2} are satisfied, in three steps.

\vspace{.2cm}

\emph{STEP 1: $\mathcal{G}$ is bounded.}
By Lemma \ref{lemma1},
$$[a_{\chi_N p}]_{\lambda, \Lambda} = [a_{\chi_N \psi p}]_{\lambda, \Lambda} \leq \|\chi_N\|^w_\lambda \cdot [a_{\psi p}]_{\lambda, \Lambda},$$
and
$$\|\chi_N\|^w_\lambda = \int |\hat{\Phi}| \cdot |\widehat{\phi_{(N)}}|^N \cdot e^{\lambda w(\xi)} \, d\xi \leq \|\Phi\|^w_\lambda \cdot \| \widehat{\phi_{(N)}} \|_{L^\infty}^N \leq \|\Phi\|^w_\lambda
\cdot \| \phi_{(N)} \|_{L^1}^N = \|\Phi\|^w_\lambda.$$
Thus any 
$[\cdot]_{\lambda, \Lambda}$ with $[a_{\psi p}]_{\lambda, \Lambda}$ 
finite is bounded on $\mathcal{G}$, which by Lemma \ref{lemma1} happens if 
$\Lambda \geq \lambda + m$.

\vspace{.2cm}

\emph{STEP 2: The function $\inf_N |A_N \hat{u}|$, with $A_N$ given by the kernel $a_{\chi_N p}$, is $w$-slowly decreasing.}
For this, we need a lemma.
\begin{lemma} \label{sublemma}
Let 
$v \in \mathcal{F}_w$, $\chi, \phi \in \mathcal{D}_w$,
and
$\phi \chi = \phi$.
Then for any real $\lambda$ and $\xi \in R^n$,
\begin{equation} \label{sublemma-estimate}
|\widehat{\chi v}(\xi)| \geq |\hat{v}(\xi)| - |||(1-\phi)v |||_\lambda^w \cdot (1+ \|\chi\|_\lambda^w) \cdot e^{-\lambda w(\xi)}.
\end{equation}
\end{lemma}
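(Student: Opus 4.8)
\textbf{Proof plan for Lemma \ref{sublemma}.}

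The plan is to write $\chi v = v - (1-\chi)v$ and to control the correction term $(1-\chi)v$ on the Fourier side using the hypothesis $\phi\chi=\phi$, which gives $(1-\chi)\phi = 0$, hence $(1-\chi)v = (1-\chi)(1-\phi)v$. So the Fourier transform $\widehat{\chi v}(\xi) = \hat v(\xi) - \widehat{(1-\chi)(1-\phi)v}(\xi)$, and by the triangle inequality $|\widehat{\chi v}(\xi)| \geq |\hat v(\xi)| - |\widehat{(1-\chi)(1-\phi)v}(\xi)|$. It remains to bound the last term by the product of seminorms appearing on the right of (\ref{sublemma-estimate}).

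First I would compute $\widehat{(1-\chi)(1-\phi)v} = \widehat{(1-\chi)}* \widehat{(1-\phi)v}$ as a convolution (here $(1-\chi)$ is a distribution of the form $\delta - \chi$, so its Fourier transform is $1-\hat\chi$ as a tempered object, and the convolution is meant in the sense that pairs $\mathcal{F}_w$-type growth against $\mathcal{D}_w$-decay). Concretely, $\widehat{(1-\chi)(1-\phi)v}(\xi) = \widehat{(1-\phi)v}(\xi) - \int \hat\chi(\xi-\eta)\,\widehat{(1-\phi)v}(\eta)\,d\eta$. Then I estimate each piece: using the definition (\ref{w-norm-fourier}) of $|||\cdot|||_\lambda^w$, we have $|\widehat{(1-\phi)v}(\eta)| \leq |||(1-\phi)v|||_\lambda^w\cdot e^{-\lambda w(\eta)}$, and subadditivity of $w$ gives $e^{-\lambda w(\eta)} = e^{-\lambda w(\eta) + \lambda w(\xi) - \lambda w(\xi)} \leq e^{\lambda |w(\xi)-w(\eta)|}\cdot e^{-\lambda w(\xi)}$, with $|w(\xi)-w(\eta)| \leq w(\xi-\eta)$. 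So the integral term is bounded by $|||(1-\phi)v|||_\lambda^w \cdot e^{-\lambda w(\xi)}\cdot \int |\hat\chi(\xi-\eta)|\, e^{\lambda w(\xi-\eta)}\,d\eta = |||(1-\phi)v|||_\lambda^w \cdot e^{-\lambda w(\xi)}\cdot \|\chi\|_\lambda^w$, recalling (\ref{w-norm}). The non-convolved piece $|\widehat{(1-\phi)v}(\xi)| \leq |||(1-\phi)v|||_\lambda^w\cdot e^{-\lambda w(\xi)}$ contributes the remaining summand, giving the factor $(1+\|\chi\|_\lambda^w)$. Summing yields (\ref{sublemma-estimate}).

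The main obstacle, and the only point requiring care, is the bookkeeping with negative $\lambda$: since $\lambda$ is allowed to be real rather than positive, $e^{\lambda w}$ need not be a growth factor, but the subadditivity estimate $|w(\xi)-w(\eta)|\leq w(\xi-\eta)$ together with $\phi,\chi\in\mathcal{D}_w$ (so $\|\chi\|_\lambda^w < \infty$ for all real $\lambda$) keeps every integral convergent, and $v\in\mathcal{F}_w$ guarantees $|||(1-\phi)v|||_\lambda^w$ is finite for $\lambda$ sufficiently negative — but the inequality as stated holds for any real $\lambda$ for which the right side is finite, which is exactly the intended reading. A secondary subtlety is justifying the convolution identity $\widehat{(1-\chi)(1-\phi)v} = (1-\hat\chi)*\widehat{(1-\phi)v}$ rigorously in the distributional pairing; this is routine given the Paley–Wiener description of $\hat{\mathcal{D}}_w$ and $\hat{\mathcal{E}}_w'$ recalled in Section \ref{spaces}, so I would dispatch it with a one-line reference.
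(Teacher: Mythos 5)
Your argument is correct and is essentially the paper's own proof: the same use of $\phi\chi=\phi$ to write $(1-\chi)v=(1-\chi)(1-\phi)v$, followed by the same splitting of that term into $(1-\phi)v$ minus $\chi(1-\phi)v$ and the convolution/subadditivity estimate on the Fourier side, yielding the factor $(1+\|\chi\|_\lambda^w)\,|||(1-\phi)v|||_\lambda^w\,e^{-\lambda w(\xi)}$. (Your remark on negative $\lambda$ is apt; strictly the subadditivity step produces $\|\chi\|_{|\lambda|}^w$ there, a looseness already present in the paper's statement and harmless since the lemma is applied only with $\lambda>0$.)
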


\begin{proof}
Write
$$v = \chi v + (1- \chi)v = \chi v + (1- \chi)(1- \phi)v.$$
The Fourier transform of the last term times $e^{\lambda w(\xi)}$ is clearly bounded by
$$|||(1- \chi)(1- \phi)v |||_\lambda^w,$$
recalling (\ref{w-norm-fourier}), in turn bounded by
$$|||(1- \chi)v |||_\lambda^w + |||(1- \phi)v |||_\lambda^w \leq |||(1-\phi)v |||_\lambda^w \cdot (1+ \|\chi\|_\lambda^w).$$
\end{proof}

Now $A_N \hat{u}$ is the Fourier transform of $\chi_N \cdot (\psi p(x,D)u)$.
Use Lemma \ref{sublemma} with $\chi=\chi_N$, $v= \psi p(x,D)u$, and $\phi \in \mathcal{D}_w$ equal to one near the $w$-singular support of $v$ and with support in the interior of the set where $\chi_N=1$.
But 
$(1-\phi)v \in \mathcal{F}_w \cap \mathcal{E}_w$, 
hence 
$|||(1- \chi)v |||_\lambda^w < \infty$ for any $\lambda \in R$, 
and by STEP 1, 
$\|\chi_N\|_\lambda^w \leq \|  \Phi \|_\lambda^w$ 
for all $N \geq 0$.
It follows that for $\lambda > 0$ the infimum of $|A_N \hat{u} (\xi)|$ over $N \geq 0$ is bounded from below by the modulus of the Fourier transform of $\psi p(x,D)u$ at $\xi$ plus $o(e^{-\lambda w(\xi)})$.
This concludes STEP 2.

\vspace{.2cm}

\emph{STEP 3: The estimate (\ref{lemma2-estimate}).}
Let $\lambda_0$ be such that the integral $\int e^{(m-\lambda_0) w(\xi)} \, d\xi$ is finite, and show that for all $\lambda > 0$ there is $\rho > 0$ such that
\begin{equation} \label{step3-estimate1}
\inf_{N \geq 0} \ \int_{|\eta - \xi| > \rho w'(\xi)} |\widehat{\chi_N p(\cdot, \eta)}(\xi - \eta)| \cdot e^{-\lambda_0 w(\eta)} \, d\eta = o(e^{-\lambda w(\xi)})
\end{equation}
as $|\xi| \rightarrow \infty$.
Begin by estimating the integrand in a standard way.
Bound for $\alpha \geq 0$ the product of $|\xi^\alpha|$ and the modulus of the Fourier transform of 
$\chi_N p(\cdot, \eta)$ at $\xi$ 
by the $L_1$ norm of $D_x^\alpha (\chi_N p(\cdot,\eta))$, 
in turn is bounded by its $L_\infty$ norm times the volume of $U$.
Apply Leibniz formula, noting (\ref{ehrenpreis-unit}), and that
$$\sup_{x \in \bar{U}'} |D_x^\beta p(x, \xi)| \leq |p \,|_{m; r, \bar{U}'} \cdot (L_{|\beta|}/r)^{|\beta|} \cdot e^{m w(\eta)}$$
for some $r > 0$ and all $\beta, \eta$, the symbol $p$ being of class $C^L$ on
$\bar{U}'$; in this way, bound the Fourier transform of $\chi_N p(\cdot, \eta)$ at $\xi$ by
$C (L_N/r |\xi|)^N \cdot e^{m w( \xi)}$, $N= 0, 1, \ldots$.
The integral in (\ref{step3-estimate1}), written as as the integral of 
$|\widehat{\chi_N p(\cdot, \xi - \eta)}(\eta)| \cdot e^{- \lambda_0 w(\xi - \eta)}$
over the set 
$\{ |\eta| > \rho w'(\xi) \}$, 
is then bounded by
$$ C (L_N/r)^N \cdot e^{(\lambda_0 - m) \cdot w( \xi)} \cdot \int_{|\eta| > \rho w'(\xi)} |\eta|^{-N} \cdot e^{(\lambda_0 - m) \cdot w( \xi)} \, d\eta $$
and thus by
$$ e^{(\lambda_0 - m) \cdot w( \xi)} \cdot \int_ {R^n} e^{(m - \lambda_0) w(\eta)} \, d\eta \cdot C (L_N/(r\rho w'(\xi)))^N,$$
with the integral in the last expression finite by assumption.
Taking the infimum over $N \geq 0$, gives a bound for the left side in (\ref{step3-estimate1}) of form
$$C \cdot e^{(\lambda_0 - m) \cdot w( \xi) - q_L(r\rho w'(\xi))},$$
which is $o(e^{-\lambda w(\xi)})$ as $|\xi| \rightarrow \infty$ provided $\rho > a/r$,
where $a$ is a constant of (*) corresponding to some 
$b > \lambda + \lambda_0 -m$.
This completes STEP 3, and thus the proof of the (*) case.

For case (**) use Lemma \ref{lemma2} with $\mathcal{G} = \{a_{\psi p}\}$. STEP 1 and 2 are here trivial; for STEP 3, estimate 
$\widehat{\psi p(\cdot, \eta)}(\xi)$ by $|p|_{m;\lambda, \psi} \cdot
e^{m w(\eta) - \lambda \gamma(\xi)}$, 
and then bound the integral
$$\int_{|\eta| > w'(\xi)} |\widehat{\psi p(\cdot, \xi - \eta)}(\eta)| \cdot
e^{-\lambda_0 w(\xi - \eta)}$$
by
$$|p|_{m;\lambda, \psi} \cdot \int_{|\eta| > w'(\xi)} e^{(m- \lambda_0) w(\xi - \eta) - \lambda \gamma(\eta)} \, d\eta,$$
which, by the monotonicity of $\Gamma$, is bounded by
$$|p|_{m;\lambda, \psi} \cdot \int_{R^n} e^{(m- \lambda_0) w(\eta)} \, d\eta \cdot e^{(\lambda_0 - m) w(\xi) -\lambda \Gamma (w'(\xi))}.$$
Clearly, condition (**) makes the last expression 
$o(e^{- \Lambda w(\xi)})$ as $\xi \rightarrow \infty$ 
for any $\Lambda > 0$ 
with $\lambda=\lambda(\Lambda)$ large enough.

The proof of the theorem is now complete.
\end{proof} 

Record explicitly some consequences of Theorem \ref{thm-main}.

\begin{theorem} \label{analytic-multiplier}
Let $w \in M$ be slowly varying, $u \in \mathcal{E}_w'$, and let $f \in \mathcal{E}_w)$ be real analytic in a neighbourhood of the $w$-singular support of $u$.
Then, if $fu$ is $w$-invertible, then so is $u$.
\end{theorem}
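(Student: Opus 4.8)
The plan is to derive Theorem~\ref{analytic-multiplier} as the special case of Theorem~\ref{thm-main} in which the pseudo-convolution operator $p(x,D)$ is simply multiplication by $f$. To set this up, first I would observe that multiplication by a smooth function $f$ is the operator attached to the $\xi$-independent symbol $p(x,\xi):=f(x)$; this is a symbol of order $m=0$, and it lies in $\Sigma_w^0(\mathcal{E}_w)$ precisely because $f\in\mathcal{E}_w$. Moreover $p(x,D)u=fu$, whose $w$-singular support is contained in that of $u$, so it is compact, and we may take $U$ to be any relatively compact open neighbourhood of $\mathrm{sing\,supp}_w(u)$ on which $f$ is real analytic. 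Since $u\in\mathcal{E}_w'\subset\mathcal{F}_w$ (an element of $\mathcal{E}_w'$ has Fourier--Laplace transform satisfying (\ref{paley-wiener}) for some real $\lambda$, hence defines an element of $\mathcal{F}_w^{(\lambda)}$), all the hypotheses on $u$ and $p$ in Theorem~\ref{thm-main} are met.

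Next I would check that the real analyticity of $f$ near $\mathrm{sing\,supp}_w(u)$ lets us invoke condition (*) with $C^L$ the real analytic class ($L_k=k$). Indeed, shrinking $U$ if necessary, $f$ is real analytic on the compact $\bar U$, so the $\xi$-independent symbol $p$ is of class $C^L$ (real analytic) on $\bar U$ in the sense of (\ref{symbol-CL}) — the estimate there holds with $m=0$ since $p$ does not depend on $\xi$. By the Remark following Theorem~\ref{thm-main}, in the real analytic case condition (*) holds as soon as $w'\succ w$; in particular it holds with $w'\sim w$, say $w'=w$. Applying Theorem~\ref{thm-main} with this choice then yields that $\hat u$ is $w$-slowly decreasing, and since $u$ has compact support its $w$-singular support is a fortiori compact, so the last clause of Theorem~\ref{thm-main} gives that $u$ is $w$-invertible. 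By Theorem~\ref{theorem1} this is exactly the assertion that $u*\mathcal{E}_w=\mathcal{E}_w$, i.e.\ $u$ is $w$-invertible in the Ehrenpreis sense.

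The only genuinely non-routine point is the verification that multiplication by $f\in\mathcal{E}_w$ really does fall under the umbrella of $\Sigma_w^0(\mathcal{E}_w)$ and that the operator $p(x,D)$ of Section~\ref{pseudo-convolution} specializes to multiplication by $f$ for this constant-in-$\xi$ symbol: from (\ref{action}), when $p(x,\eta)=f(x)$ is independent of $\eta$, the kernel $a_{\psi p}(\xi,\eta)=\widehat{\psi f}(\xi-\eta)$ depends only on $\xi-\eta$, and $A_{a_{\psi p}}\hat\phi=\widehat{\psi f}*\hat\phi=\widehat{\psi f\phi}$, which is the Fourier transform of multiplication by $\psi f$, as required. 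I expect this identification to be the main (minor) obstacle; once it is in place, Theorem~\ref{analytic-multiplier} is an immediate corollary of Theorem~\ref{thm-main} together with its Remark, with no further estimates needed.
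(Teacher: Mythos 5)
Your proposal is correct and follows essentially the same route as the paper, whose proof is just the one-line citation of the Remark to Theorem~\ref{thm-main} together with the relation between the $w$-singular supports of $u$ and $fu$. You merely write out the routine verifications the paper leaves implicit (that the $\xi$-independent symbol $p(x,\xi)=f(x)$ lies in $\Sigma_w^0(\mathcal{E}_w)$ and gives multiplication by $f$, that $\mathcal{E}_w'\subset\mathcal{F}_w$, and that condition (*) with $w'\sim w$ holds in the real analytic case), so no further comment is needed.
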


\begin{proof}
By Remark to Theorem \ref{thm-main} and the fact that the $w$-singular supports of $u$ and $fu$ coincide.
\end{proof} 

\begin{theorem} \label{thm-w-multiplier}
Let $w, w' \in M$ be slowly varying, $u \in \mathcal{E}_w'$, and let $f \in \mathcal{E}_w$ be of class $\mathcal{E}_\gamma$ in a neighbourhood of the $w$-singular support of $u$, where $\gamma \in M$ has the form $\gamma(\xi)= \Gamma(|\xi|)$ with $\Gamma$ non-decreasing and $\Gamma \circ w' \succ w$.
Then, if $fu$ is $w$-invertible, then $u$ is $w'$-invertible.
\end{theorem}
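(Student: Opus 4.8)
The plan is to read Theorem~\ref{thm-w-multiplier} off Theorem~\ref{thm-main}, case~(**), by realising multiplication by $f$ as a pseudo-convolution operator. I would take for $p$ the symbol independent of the frequency variable, $p(x,\xi):=f(x)$; by the defining formula for $p(x,D)$ in Section~\ref{pseudo-convolution} this operator is (a constant multiple of) multiplication by $f$, so $p(x,D)u=fu$. Since $f\in\mathcal{E}_w$, the seminorms \eqref{symbol-w} with $m=0$ reduce to the $\mathcal{E}_w$-seminorms of $f$, whence $p\in\Sigma_w^{0}(\mathcal{E}_w)$; and $u\in\mathcal{E}_w'\subset\mathcal{F}_w$ by Paley--Wiener. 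Condition~(**) for this $p$ is nothing but the hypothesis $\Gamma\circ w'\succ w$ imposed on $f$.

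Next I would fix the localisation. As $f$ is smooth, the $w$-singular support of $fu$ is contained in that of $u$, hence compact, and it lies inside the open set $V$ on which $f$ is of class $\mathcal{E}_\gamma$ (a neighbourhood of the $w$-singular support of $u$). Choose an open $U$ with the $w$-singular support of $fu$ contained in $U$, with $\bar U\subset V$ and $\bar U$ compact, and a cut-off $\psi\in\mathcal{D}_w$ equal to one on $U$ with $\operatorname{supp}\psi\subset V$. Then $\psi\,p(\cdot,\eta)=\psi f\in\mathcal{E}_\gamma$ for every $\eta$, so $p$ is of class $\mathcal{E}_\gamma$ near the $w$-singular support of $p(x,D)u$ --- which is all that the proof of Theorem~\ref{thm-main} in case~(**) uses, that proof invoking the symbol only through the seminorm $|p|_{m;\lambda,\psi}$ (computed in the $\mathcal{E}_\gamma$ scale) in its third step.

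With these identifications Theorem~\ref{thm-main} applies: the assumed $w$-invertibility of $fu=p(x,D)u$, together with~(**), forces $\hat u$ to be $w'$-slowly decreasing; and since $u$ has compact support its $w'$-singular support is compact, so the closing clause of Theorem~\ref{thm-main} yields that $u$ is $w'$-invertible. This is the quantitative companion of Theorem~\ref{analytic-multiplier}: there $f$ real analytic forces $w'\sim w$, here the admissible loss $w\to w'$ is exactly the one allowed by $\Gamma\circ w'\succ w$.

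I expect the only delicate point to be the gap between the literal hypothesis of case~(**), where $p$ is assumed to lie in $\Sigma_w(\mathcal{E}_\gamma)$ over all of $X$, and the present situation, where $f$ is of class $\mathcal{E}_\gamma$ only near the singular support. This is inessential: $w$-invertibility of $\psi p(x,D)u$ does not depend on the choice of $\psi\in\mathcal{D}_w$ equal to one near the $w$-singular support of $p(x,D)u$, so one may always arrange $\operatorname{supp}\psi\subset V$. Should one prefer to stay strictly within the stated hypotheses, replace $f$ by $\chi f$, where $\chi\in\mathcal{D}_\gamma$ equals one near the $w$-singular support of $fu$ and is supported in $V$ --- such $\chi$ exists by the local units in $\mathcal{D}_\gamma$ furnished by~\eqref{bound} for $\gamma$; then $\chi f\in\mathcal{E}_\gamma$, while $fu$ differs from $(\chi f)u$ only by $(1-\chi)fu\in\mathcal{D}_w$, which leaves $w$-invertibility unchanged, so Theorem~\ref{thm-main} applies to $(\chi f)u$ without further ado.
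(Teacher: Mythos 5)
Your proposal is correct and follows exactly the route the paper intends: the paper's own proof is the one-line remark that the theorem is ``obvious by condition (**)'', i.e.\ apply Theorem~\ref{thm-main} with the frequency-independent symbol $p(x,\xi)=f(x)\in\Sigma_w^0(\mathcal{E}_w)$, so that $p(x,D)u$ is (a constant multiple of) $fu$ and (**) is precisely the hypothesis $\Gamma\circ w'\succ w$. Your additional remarks on localising to the neighbourhood where $f$ is of class $\mathcal{E}_\gamma$ and on invertibility being insensitive to perturbations in $\mathcal{D}_w$ merely spell out details the paper leaves implicit.
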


\begin{proof}
Obvious by the condition (**).
\end{proof} 

Specialize now to Gevrey classes, recalling that 
$\mathcal{E}_{(\alpha)}$ 
denotes
$\mathcal{E}_{w_{\alpha}}$,  
$w_{\alpha}(\xi)= |\xi|^\alpha$, $0 < \alpha < 1$, 
and that
$\mathcal{E}_{(1)}$ denotes the real analytic class.

\begin{theorem} \label{gevrey-multiplier}
Let $0 < a \leq 1$ and $0 < r,s < 1$ satisfy $as \geq r$.
Let $u \in \mathcal{E}_{(r)}'$, and $f \in \mathcal{E}_{(r)}$ be of class $\mathcal{E}_{(a)}$ in a neighbourhood of the $\mathcal{E}_{(r)}$-singular support of
$u$.
Then, if $fu$ is $\mathcal{E}_{(r)}$-invertible, then $u$ is
$\mathcal{E}_{(s)}$-invertible.
\end{theorem}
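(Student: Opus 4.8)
The plan is to reduce Theorem~\ref{gevrey-multiplier} to Theorem~\ref{thm-w-multiplier} by a direct specialization to the Gevrey weights. We take $w(\xi)=|\xi|^r$, $w'(\xi)=|\xi|^s$, and for the regularity of $f$ we take $\gamma(\xi)=|\xi|^a$, i.e. $\Gamma(t)=t^a$. These are all monotone in the radius, hence slowly varying members of $M$ (for the admissible ranges $0<r,s<1$, and for $a<1$; the borderline $a=1$ is the analytic case, covered by the Remark to Theorem~\ref{thm-main} via condition (*), so that case must be handled separately but identically in spirit). With these choices $\mathcal{E}_w=\mathcal{E}_{(r)}$, $\mathcal{E}_\gamma=\mathcal{E}_{(a)}$, and $\mathcal{E}_{w'}=\mathcal{E}_{(s)}$ by definition, so the hypotheses on $u$ and $f$ match verbatim those of Theorem~\ref{thm-w-multiplier}.

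The one thing to verify is the domination hypothesis $\Gamma\circ w'\succ w$ of condition (**), which here reads
\begin{equation*}
(|\xi|^s)^a = |\xi|^{as} \ \succ\ |\xi|^r.
\end{equation*}
Since $\succ$ for positive functions means $O(\cdot)$ for $|\xi|$ large, and $|\xi|^r\le C+|\xi|^{as}$ for all $\xi$ precisely when $r\le as$, the assumed inequality $as\ge r$ is exactly what is needed. (When $a=1$ one instead checks condition (*): with $C^L$ the analytic class $q_L(t)\sim t$, so $q_L(a w'(\xi))\sim a|\xi|^s\succ |\xi|^r = w(\xi)$ reduces again to $s\ge r$, which follows from $as\ge r$ with $a\le 1$.) Then Theorem~\ref{thm-w-multiplier} applies directly: if $fu$ is $\mathcal{E}_{(r)}$-invertible, then $u$ is $\mathcal{E}_{(s)}$-invertible, which is the assertion.

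I do not expect a genuine obstacle here; the proof is essentially a dictionary translation. The only points requiring a word of care are (i) confirming that the Gevrey weights $|\xi|^\alpha$ really are slowly varying and lie in $M$ (they are monotone in the radius, and $\int |\xi|^\alpha (1+|\xi|^{n+1})^{-1}\,d\xi<\infty$ since $\alpha<1<n+1$, so (\ref{bound}) holds; the logarithmic lower bound is trivial), and (ii) noting that the $\mathcal{E}_{(r)}$-singular support of $u$ equals that of $fu$ when $f$ is $\mathcal{E}_{(a)}$ — hence at least as regular as $\mathcal{E}_{(r)}$ — near that set, so that the neighbourhood hypotheses are consistent, and the $\mathcal{E}_{(s)}$-singular support of $u$ is then automatically compact (being contained in the compact $\mathcal{E}_{(r)}$-singular support). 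With these remarks the proof is a single sentence invoking Theorem~\ref{thm-w-multiplier} (respectively the Remark to Theorem~\ref{thm-main} in the endpoint case $a=1$).

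\begin{proof}
The Gevrey weights $w=|\xi|^r$, $w'=|\xi|^s$, and $\gamma=|\xi|^a$ are monotone in the radius, hence slowly varying elements of $M$ (for $a=1$ read $\gamma$ as the analytic class). The condition $\Gamma\circ w'\succ w$ of (**), with $\Gamma(t)=t^a$, reads $|\xi|^{as}\succ|\xi|^r$, which holds precisely because $as\ge r$. Since $\mathcal{E}_{w}=\mathcal{E}_{(r)}$, $\mathcal{E}_{\gamma}=\mathcal{E}_{(a)}$, $\mathcal{E}_{w'}=\mathcal{E}_{(s)}$, the claim for $0<a<1$ is immediate from Theorem~\ref{thm-w-multiplier}. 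For $a=1$, condition (*) holds since $q_L(t)\sim t$ for the analytic class, so $q_L(aw'(\xi))\succ w(\xi)$ again reduces to $s\ge r$, which follows from $as\ge r$ and $a\le1$; the conclusion then follows from Theorem~\ref{thm-main} and its Remark, noting that the $\mathcal{E}_{(s)}$-singular support of $u$ is contained in its compact $\mathcal{E}_{(r)}$-singular support.
\end{proof}
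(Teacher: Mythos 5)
Your argument is correct and is essentially the paper's own proof: the paper likewise dispatches the case $0<a<1$ by Theorem \ref{thm-w-multiplier} (condition (**) with $\Gamma(t)=t^a$, $w'=|\xi|^s$, $w=|\xi|^r$ reducing to $as\ge r$) and the endpoint $a=1$ by the analytic case of Theorem \ref{thm-main} (there packaged as Theorem \ref{analytic-multiplier}). One parenthetical slip worth correcting: since $\mathcal{E}_{(s)}\subset\mathcal{E}_{(r)}$ for $s\ge r$, the $\mathcal{E}_{(s)}$-singular support of $u$ \emph{contains} (not is contained in) the $\mathcal{E}_{(r)}$-singular support, but its compactness is automatic anyway because it lies in the compact support of $u\in\mathcal{E}_{(r)}'$.
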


\begin{proof}
Follows by Theorem \ref{analytic-multiplier} if $a=1$, and by Theorem \ref{thm-w-multiplier} if $0 < a < 1$.
\end{proof} 

\begin{theorem} \label{thm-pdo}
Let $u \in \mathcal{E}'$ and let $p(x,D)$ be a pseudo-differential operator in $R^n$ elliptic and analytic on the singular support of $u$.
Let $\psi \in C^\infty_0(R^n)$ be one in a neighbourhood of the singular support of $u$.
Then $\psi p(x,D) u$ is invertible if and only if $u$ is invertible.
\end{theorem}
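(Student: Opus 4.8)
The plan is to deduce Theorem \ref{thm-pdo} from Theorem \ref{thm-main} together with Theorem \ref{elliptic-inverse}, working throughout in the Schwartz case $w=w'=\log(1+|\cdot|)$, which is slowly varying since it is monotone in the radius. The ``if'' direction is the substantive one: assuming $u$ invertible, I must show $\psi p(x,D)u$ invertible. For this I would invoke the parametrix $q(x,D)\in S^{-m}$ of Theorem \ref{elliptic-inverse}, analytic on the singular support of $u$, so that $q(x,D)p(x,D) = I + (\text{smoothing})$ on $\mathcal{S}'$, and apply $q(x,D)$ (times a suitable cut-off) to $\psi p(x,D)u$ to recover $u$ modulo a smooth, hence $w$-negligible, term; then running Theorem \ref{thm-main} in the direction with $p$ replaced by $q$ (which is also a symbol of class $C^L$ real analytic on a neighbourhood of $\mathrm{sing\,supp}\,u$, so that the Remark applies with $w'\sim w$) yields the invertibility of $u$ from that of $q(x,D)p(x,D)u \sim \psi p(x,D)u$.

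For the ``only if'' direction --- $u$ invertible implies $\psi p(x,D)u$ invertible --- I would apply Theorem \ref{thm-main} directly with $p$ the given symbol: since $p\in S^m\subset\Sigma_w^m(\mathcal{E}_w)$ for $w=\log(1+|\cdot|)$ and $p$ is analytic on a neighbourhood $U$ of $\mathrm{sing\,supp}\,u$, the Remark to Theorem \ref{thm-main} (condition (*) holds for the real analytic class whenever $w'\succ w$, here with $w'\sim w$) tells us that $w$-invertibility of $\psi p(x,D)u$ would force $w$-invertibility of $u$; but we want the converse implication, so actually for this direction I should instead use $u = q(x,D)(\psi p(x,D)u) + (\text{smooth})$ again, i.e. both directions ride on the parametrix identity combined with the one-directional Theorem \ref{thm-main}. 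Concretely: invertibility of $u$ plus the parametrix identity $u \equiv q(x,D)\,\psi_1 p(x,D)u$ modulo $C^\infty$ (with $\psi_1$ a cut-off adapted to the supports) shows, by Theorem \ref{thm-main} applied to the symbol $q$, that $\psi_1 p(x,D)u$ cannot be non-invertible without $u$ being non-invertible; a standard argument that invertibility is a property of the class modulo $\mathcal{D}_w$ and is insensitive to the choice of cut-off $\psi$ equal to one near the singular support then transfers this to $\psi p(x,D)u$.

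I would organize the write-up as: (i) fix $w=\log(1+|\cdot|)$ and note $\mathrm{sing\,supp}(\psi p(x,D)u)\subset\mathrm{sing\,supp}\,u$ because $p(x,D)$ is pseudo-local and $\psi$ is a local unit; (ii) produce $q\in S^{-m}$ analytic on $\mathrm{sing\,supp}\,u$ with $q(x,D)p(x,D)=I+r(x,D)$, $r$ smoothing, from Theorem \ref{elliptic-inverse}; (iii) choose cut-offs $\psi,\psi_1,\psi_2\in C_0^\infty$, each one in a neighbourhood of $\mathrm{sing\,supp}\,u$ and nested appropriately, so that the compositions $\psi_2 q(x,D)\psi_1 p(x,D)\psi$ differ from $\psi_2 q(x,D)p(x,D)$, hence from the identity, only by operators with smooth --- thus $w$-negligible --- output on $\mathcal{E}'$; (iv) apply Theorem \ref{thm-main} (with the Remark, $w'\sim w$) once to $p$ and once to $q$ to get both implications; (v) remove the auxiliary cut-offs using the invariance of $w$-invertibility under $\mathcal{E}_w'/\mathcal{D}_w$ noted after Theorem \ref{theorem1}. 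I expect the main obstacle to be step (iii): keeping careful track of which operator composition is legitimate on $\mathcal{E}'$ (as opposed to on $\mathcal{S}'$ where Theorem \ref{elliptic-inverse} is stated), and checking that the ``error'' operators genuinely map $u$ into $C^\infty\cap\mathcal{F}_w$ so that Lemma \ref{lemma1}'s conclusion ``smooth functions in $\mathcal{F}_w$ go to smooth functions'' can be used to discard them; the analyticity bookkeeping for $q$ is already handled by Proposition \ref{proposition1} and the remark following Theorem \ref{elliptic-inverse}, so that part should be routine.
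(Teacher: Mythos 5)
Your proposal follows the paper's own proof essentially verbatim: one implication is the direct application of Theorem \ref{thm-main} with its Remark ($w'\sim w$ in the real analytic case, pseudo-locality supplying analyticity near the relevant singular support), and the other rides on the parametrix of Theorem \ref{elliptic-inverse} through the identity $\psi q(x,D)(\psi p(x,D)u)-u\in C_0^\infty$, invoking Theorem \ref{thm-main} a second time for the symbol $q$ and using invariance of invertibility modulo $\mathcal{D}_w$. Apart from the swapped ``if''/``only if'' labels and the garbled sentence in your first paragraph (Theorem \ref{thm-main} applied to $q$ yields invertibility of $\psi p(x,D)u$ from that of $u$, not the reverse --- your second paragraph and outline state it correctly), the argument and its cut-off bookkeeping match the paper's.
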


\begin{proof}
That 
``$\psi p(x,D) u$ invertible implies $u$ invertible''
follows by Theorem \ref{thm-main} and the Remark there, for by the pseudo-local property of pseudo-differential operators, $p(x,D) u$ is analytic on the singular support of $p(x,D)u$.
Conversely, by Theorem~\ref{elliptic-inverse} and the pseudo-local property,
$$ \psi q(x,D) (\psi p(x, D) u) - u \in C_0^\infty$$
for some $q(x,D)$ analytic on the singular support of $\psi p(x,D)u$.
Hence if $u$ is invertible, then so is $\psi p(x,D)u$, by the first part of the theorem.
\end{proof}

\subsection{Analyticity is necessary} \label{counter-examples}

The analyticity of multiplier in Theorems \ref{analytic-multiplier} and \ref{gevrey-multiplier} cannot be relaxed, as the following construction shows (suggested to the author by Lars H\"{o}rmander in a conversation at the Mittag-Leffler Institute back in 1986.)

Let, from now on, the dimension of the underlying Euclidean space equal one. 
Start by recording two elementary technical facts. 
\begin{lemma}  \label{lemma-pre-last}
Let $\nu$ be a symmetric positive measure on the real line, and let $E$ be the union of a family of open intervals, each of length at least two.
Let $r(\xi)$ denote the distance from $\xi \in R$ to the set $E$.
Then,
\begin{equation} \label{eq-lemma-pre-last}
\nu( (r(\xi), r(\xi) + 1) ) \leq  \nu * \chi_E (\xi) \leq 2 \nu((r(\xi), + \infty)).
\end{equation}
\end{lemma}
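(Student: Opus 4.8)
The plan is to unwind the convolution $\nu * \chi_E(\xi) = \int \chi_E(\xi - t)\,d\nu(t) = \nu(\xi - E)$, where $\xi - E = \{\xi - y : y \in E\}$, and to compare this set with the sets appearing in the two bounds. Since $\nu$ is symmetric, $\nu(\xi - E) = \nu(E - \xi)$, so after translating we may think of the measure of $E$ shifted so that the point $\xi$ sits at the origin; equivalently, $\nu * \chi_E(\xi) = \nu(\{t : \xi - t \in E\})$. The key geometric observation is that $r(\xi) = \operatorname{dist}(\xi, E)$ is exactly the distance from $0$ to the set $\xi - E$ (again using that distance is translation-covariant), so the set $\{t : \xi - t \in E\}$ is a union of intervals whose nearest point to the origin is at distance $r(\xi)$.

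For the \textbf{upper bound}, I would argue as follows. The set $S := \{t : \xi - t \in E\}$ is a union of open intervals, none of which meets $(-r(\xi), r(\xi))$, by definition of $r(\xi)$. Write $S = S^+ \cup S^-$ where $S^+ = S \cap (0,\infty)$ and $S^- = S \cap (-\infty, 0)$. Then $S^+ \subseteq (r(\xi), +\infty)$ and $S^- \subseteq (-\infty, -r(\xi))$, so by symmetry of $\nu$,
\[
\nu(S) = \nu(S^+) + \nu(S^-) \leq \nu((r(\xi), +\infty)) + \nu((-\infty, -r(\xi))) = 2\,\nu((r(\xi), +\infty)),
\]
which is the right-hand inequality. (One should be slightly careful whether $0 \in E - \xi$ contributes, but that is a single point and the intervals are open; in any case it only helps.)

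For the \textbf{lower bound}, I would use that $E$ is a union of intervals \emph{each of length at least two}. Pick an interval $I \subseteq E$ realizing (or nearly realizing) the distance: there is an endpoint of some component of $S$ at distance $r(\xi)$ from $0$, and the component containing it, being a translate/reflection of a component of $E$, has length $\geq 2$. Hence $S$ contains an interval of length $\geq 2$ with an endpoint at distance $r(\xi)$ from the origin; such an interval contains either $(r(\xi), r(\xi)+1)$ or $(-r(\xi)-1, -r(\xi))$ (depending on orientation — here one uses length $\geq 2$ so that the interval extends at least a full unit past its near endpoint, and that the near endpoint is on one side of $0$). By symmetry of $\nu$, $\nu$ of that sub-interval equals $\nu((r(\xi), r(\xi)+1))$, giving the left-hand inequality. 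The only real subtlety — the step I expect to be the main obstacle — is handling the infimum/endpoint bookkeeping cleanly: whether the nearest point of $S$ to $0$ is actually attained (it is, since $E$ is open so $\xi - E$ is open, its complement is closed, and $r(\xi)$ is attained on the boundary), and checking that the component of $S$ attaining the distance really does stick out at least one unit on the far side so that a full length-one interval starting at distance $r(\xi)$ fits inside it; the hypothesis "length at least two" is exactly what makes this work regardless of which side of $0$ the near endpoint lies on, or whether $0$ itself lies in $S$ (in which case $r(\xi)=0$ and both bounds are immediate). Once the geometry is pinned down, both inequalities are one line each via the symmetry of $\nu$.
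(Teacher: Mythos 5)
Your proposal is correct and follows essentially the same route as the paper's proof: after reducing to the case $\xi=0$ (you do this by unwinding $\nu*\chi_E(\xi)=\nu(\xi-E)$ and using translation/reflection invariance, the paper by the same invariance directly), you take the nearest point of $\overline{E}$, use the length-at-least-two hypothesis to fit a unit interval $(r(\xi),r(\xi)+1)$ or its reflection inside the nearest component, and invoke the symmetry of $\nu$ for the lower bound, while the upper bound comes from $E-\xi\subset(-\infty,-r(\xi))\cup(r(\xi),\infty)$ and symmetry. The endpoint/attainment bookkeeping you flag is handled in the paper exactly as you suggest (a nearest point $\eta_0\in\overline{E}$ exists and the adjacent component sticks out a full unit), so there is no substantive difference.
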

\begin{proof}
By translation and reflection invariance, we may assume $\xi=0$.
Let $\eta_0$ be a point in the closure of $E$ nearest the origin.
If $\eta_0 > 0$, then $(\eta_0, \eta_0 +1 ) = (r(0), r(0) + 1) \subset E$; if $\eta_0 < 0$, then $(\eta_0 - 1, \eta_0) = - (r(0), r(0) + 1) \subset E$; and, at least one of these inclusions holds if $\eta_0 = 0$.
Hence, by the symmetry of $\nu$, the left estimate in (\ref{eq-lemma-pre-last}) follows.
The right estimate follows similarly from the inclusion
$E \subset (-\infty, -r(0)) \cup (r(0), \infty)$.
\end{proof} 
\begin{lemma}  \label{lemma-last}
Let $f,g \in L^1(R)$ with $\hat{f},\hat{g} \in L^1(R)$ non-negative and $\hat{f}$ non-increasing in $R_+$.
Let 
$I_j = [\xi_j - d_j, \xi_j + d_j] \subset R_+$, $j= 1, 2, \ldots$, $\xi_j \rightarrow \infty$, $d_j = o(\xi_j)$, 
be a sequence of intervals distant from each other by at least two.
Put $E = R \setminus \bigcup_j I_j$, write $r(\xi)$ for the distance of $\xi \in R$ to the set $E$, and write $\chi_E$ for the characteristic function of $E$.
Define finally a pseudo-measure $u$ by $\hat{u} = \hat{g} * \chi_E$.
Then,
\begin{equation}\label{eq1-lemma-last}
\hat{u}(\xi) \, \leq  \, 2 \int_{d_j/2}^\infty \hat{g}(t) \, dt \ \mbox{ if }  \ |\xi - \xi_j| \leq d_j/2, \mbox{ \hspace{0.2cm} } j= 1, 2, \ldots,
\end{equation}
and,
\begin{equation}\label{eq2-lemma-last}
\widehat{fu}(\xi) \geq  \int_{-1}^0 \hat{g}(t) \, dt \ \cdot \hat{f}(r(\xi) + 2) \, \mbox{ for all } \xi \in R.
\end{equation}
\end{lemma}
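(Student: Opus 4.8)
The plan is to prove the two displayed estimates separately; both reduce to Lemma \ref{lemma-pre-last} applied to the measure $\nu$ with density $\hat g$ (respectively $\hat f$), after checking that the set $E$ satisfies the hypotheses of that lemma.

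First I would verify the standing hypotheses: since the intervals $I_j$ are mutually distant by at least two, the complement $E = R \setminus \bigcup_j I_j$ is a union of open intervals each of length at least two, so Lemma \ref{lemma-pre-last} applies with $\nu = \hat g \, dt$ (which is a symmetric positive measure because $\hat g \geq 0$ is in $L^1$ and $g$ real forces $\hat g$ even; strictly I only need positivity and symmetry, and symmetry of $E$ is not needed — re-reading the lemma, it does require $\nu$ symmetric but not $E$, so this is fine). Then $\hat u = \hat g * \chi_E$ is exactly $\nu * \chi_E$ in the notation of Lemma \ref{lemma-pre-last}, and the right inequality there gives $\hat u(\xi) \leq 2\,\nu((r(\xi),\infty)) = 2\int_{r(\xi)}^\infty \hat g(t)\,dt$ for every $\xi$. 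To get \eqref{eq1-lemma-last} it remains to bound $r(\xi)$ from below when $\xi$ lies in the middle half $[\xi_j - d_j/2, \xi_j + d_j/2]$ of $I_j$: such a $\xi$ is at distance at least $d_j/2$ from the endpoints of $I_j$, and since $E$ does not meet $I_j$ at all, $r(\xi) \geq d_j/2$; monotonicity of the tail integral then yields $\int_{r(\xi)}^\infty \hat g \leq \int_{d_j/2}^\infty \hat g$, which is \eqref{eq1-lemma-last}.

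For \eqref{eq2-lemma-last} I would first observe that $\widehat{fu} = \hat f * \hat u = \hat f * (\hat g * \chi_E) = \hat g * (\hat f * \chi_E)$ by associativity and commutativity of convolution (all factors nonnegative and integrable, so Tonelli justifies the rearrangement). Now apply Lemma \ref{lemma-pre-last} with $\nu = \hat f\,dt$: the \emph{left} inequality gives $\hat f * \chi_E(\eta) \geq \nu((r(\eta), r(\eta)+1)) = \int_{r(\eta)}^{r(\eta)+1}\hat f(t)\,dt$, and since $\hat f$ is non-increasing on $R_+$ this is at least $\hat f(r(\eta)+1)$. Convolving against the nonnegative $\hat g$,
\begin{equation*}
\widehat{fu}(\xi) = \int \hat g(\eta)\, (\hat f * \chi_E)(\xi - \eta)\, d\eta \geq \int \hat g(\eta)\, \hat f\big(r(\xi-\eta)+1\big)\, d\eta.
\end{equation*}
Restricting the $\eta$-integral to $[-1,0]$ and using $|r(\xi-\eta) - r(\xi)| \leq |\eta| \leq 1$ (the distance function is $1$-Lipschitz), hence $r(\xi-\eta)+1 \leq r(\xi)+2$, together with the monotonicity of $\hat f$ on $R_+$, gives $\hat f(r(\xi-\eta)+1) \geq \hat f(r(\xi)+2)$ for those $\eta$. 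Pulling this constant out of the integral yields \eqref{eq2-lemma-last}.

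The only genuinely delicate point is the bookkeeping around the distance function $r$: one must be careful that $r(\xi-\eta)+1$ can legitimately be compared with $r(\xi)+2$ via the Lipschitz bound, and that $\hat f$ is being evaluated at points of $R_+$ so that its monotonicity there is applicable — both hold because the relevant arguments are manifestly nonnegative. Everything else is a direct invocation of Lemma \ref{lemma-pre-last} plus Tonelli's theorem and the monotonicity of $\hat f$; I do not expect any real obstacle.
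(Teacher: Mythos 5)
Your proof is correct and follows essentially the paper's route: both estimates come from Lemma~\ref{lemma-pre-last}, with \eqref{eq1-lemma-last} exactly as in the paper (including the observation $r(\xi)\geq d_j/2$ on the middle half of $I_j$) and \eqref{eq2-lemma-last} from restricting the $\hat g$-variable to $[-1,0]$ and using the monotonicity of $\hat f$. The only (harmless) deviation is in \eqref{eq2-lemma-last}: the paper applies Lemma~\ref{lemma-pre-last} to the measure $\widehat{fg}\,dt$ acting in $\widehat{fu}=\widehat{fg}*\chi_E$ and then bounds $\widehat{fg}(t)\geq \hat f(t+1)\int_{-1}^0\hat g$, whereas you apply it to $\hat f\,dt$ first and then convolve with $\hat g$, compensating with the $1$-Lipschitz property of $r$; note that either variant tacitly uses the symmetry of the measure fed into Lemma~\ref{lemma-pre-last} ($\widehat{fg}$ there, $\hat f$ here), an assumption not listed in the present lemma's hypotheses (your appeal to ``$g$ real'' is likewise not among them) but satisfied in the paper's applications.
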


\begin{proof}
Direct application of Lemma \ref{lemma-pre-last} gives
\begin{equation} \label{eq-something}
\int_{r(\xi)}^{r(\xi)+1} \hat{g} (t) \, dt \leq \hat{g} * \chi_E (\xi) \leq 2 \cdot \int_{r(\xi)}^{+\infty} \hat{g} (t) \, dt
\end{equation}
for all $\xi \in R$.
If $|\xi - \xi_j| \leq d_j$ for some $j$, then $r(\xi) \geq d_j/2$, yielding (\ref{eq1-lemma-last}) by the right inequality in (\ref{eq-something}).
To see (\ref{eq2-lemma-last}), use the left inequality in (\ref{eq-something}) on $\widehat{fu} = \widehat{fg} * \chi_E$, observing that
$$\widehat{fg}(t) \geq \int_{-1}^0 \hat{f} (t - s) \hat{g} (s) \, ds \geq \int_{-1}^0 \hat{g} (s) \, ds \ \cdot \hat{f}(t + 1), \mbox{\hspace{.2cm}} t \geq 0,$$
and the integral of $\hat{f}(t+1)$ over $r(\xi) \leq t \leq r(\xi) + 1$ is bounded from below by $\hat{f}(r(\xi)+2)$.
\end{proof} 

Return to the mainstream.
Write for brevity $w \in \tilde{M}$ if $w \in M$ and for some $\xi_j \rightarrow \infty$ and any $\rho_j=o(\xi_j)$ there is a constant $c > 0$ such that
\begin{equation} \label{M-tilde}
\min_{|\eta| \leq \rho_j} w(\xi_j + \eta) \geq w(\xi_j), \mbox{ \hspace{.3cm}} j = 1, 2, \ldots .
\end{equation}
Any slowly varying function in $M$, note, in particular one non-decreasing in the radius, belongs to $\tilde{M}$. 

\begin{theorem} \label{thm-counter1}
Suppose $w \in \tilde{M}$ and $\phi \in M$.
Then there exist a function $f \in \mathcal{E}_\phi$ and a pseudo-measure $u$ with compact support, such that $fu$ is $w$-invertible but $u$ is not $w$-invertible.
\end{theorem}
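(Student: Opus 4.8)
The plan is to make the construction of Lemma \ref{lemma-last} concrete by choosing the data $g$, the interval centers $\xi_j$, and the half-widths $d_j$ so that both halves of the dichotomy hold. Recall that $w$-invertibility of a compactly supported pseudo-measure $v$ means $\hat v$ is $w$-slowly decreasing, i.e. \eqref{slowly-decreasing}: for some $A>0$ and all $|\xi|>1$, $\sup_{|\eta|\le Aw(\xi)}|\hat v(\xi+\eta)|\ge A^{-1}e^{-Aw(\xi)}$. Failure of $w$-invertibility of $u$ will be arranged by forcing $\hat u$ to be extremely small on a sequence of ``windows'' $|\xi-\xi_j|\le d_j/2$ whose width $d_j$ dwarfs $Aw(\xi_j)$ for every fixed $A$, while $w$-invertibility of $fu$ will follow because multiplying by $f$ restores a floor of size $\hat f(r(\xi)+2)$ times a positive constant, and $E$ meets every ball $B(\xi,2)$ except inside the windows; since $r(\xi)\le d_j/2$ on the relevant scale one gets a usable lower bound there too.

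Concretely, first I would fix $\hat g\in L^1(R)$ non-negative, even, and rapidly decreasing — e.g. $\hat g(t)=c\,e^{-|t|}$, which makes $\int_{d/2}^\infty\hat g=O(e^{-d/2})$ in \eqref{eq1-lemma-last} and keeps $\int_{-1}^0\hat g>0$ in \eqref{eq2-lemma-last}; then $g$ is real-analytic, $\hat g$ is non-increasing on $R_+$, and $u$ defined by $\hat u=\hat g*\chi_E$ is a genuine pseudo-measure (bounded Fourier transform) with compact support once $E$'s complement is fixed. Next, choose $f$: set $\hat f(t)=e^{-\phi^\sharp(t)}$ (suitably normalized, non-negative, even, non-increasing on $R_+$) where $\phi^\sharp$ is built from $\phi$ so that $f\in\mathcal E_\phi$ — this uses the Paley--Wiener description \eqref{paley-wiener}: $f\in\hat{\mathcal E}_\phi$ amounts to $|\hat f(\xi)|e^{\lambda\phi(\xi)}$ bounded for each $\lambda$, and replacing $\phi$ by a dominating concave $\tilde\phi$ (Theorem 1.2.7 of \cite{bjorck_66}, quoted in \S\ref{beurling}) one may take $\hat f=e^{-\tilde\phi}$ up to passing from $\phi$ to $\tilde\phi$; one should also ensure $\hat f$ decays slower than any $e^{-cw(\cdot)}$-type obstruction would need. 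Then pick $\xi_j\to\infty$ rapidly and $d_j$ with $d_j=o(\xi_j)$ but $d_j/w(\xi_j)\to\infty$ and $d_j/\log(1/\hat f(d_j))\to\infty$; the second condition is what lets the $fu$-bound survive, the first is what kills $u$. Separate the intervals by at least two and apply Lemma \ref{lemma-last}.

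For the failure of $w$-invertibility of $u$: on the window $|\xi-\xi_j|\le d_j/2$ estimate \eqref{eq1-lemma-last} gives $\hat u(\xi)\le 2\int_{d_j/2}^\infty\hat g\le C e^{-d_j/4}$; since for any fixed $A$ we have $Aw(\xi_j)\le d_j/4$ for $j$ large (by $d_j/w(\xi_j)\to\infty$), the ball $B(\xi_j,Aw(\xi_j))$ lies inside the window, so $\sup_{|\eta|\le Aw(\xi_j)}\hat u(\xi_j+\eta)\le Ce^{-d_j/4}$, which is eventually smaller than $A^{-1}e^{-Aw(\xi_j)}$ (as $d_j/4\ge Aw(\xi_j)+\log(CA)$ for $j$ large) — so \eqref{slowly-decreasing} fails for every $A$, here using $w\in\tilde M$ only to guarantee $w(\xi_j+\eta)\le$ something comparable is not even needed, just that $w(\xi_j)$ is the relevant scale; actually $w\in\tilde M$ enters on the positive side. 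For $w$-invertibility of $fu$: by \eqref{eq2-lemma-last}, $\widehat{fu}(\xi)\ge c_0\,\hat f(r(\xi)+2)$ for all $\xi$, with $c_0=\int_{-1}^0\hat g>0$. Given $\xi$ with $|\xi|>1$, either $\xi$ is outside all windows, where $r(\xi)\le 1$ and $\widehat{fu}(\xi)\ge c_0\hat f(3)>0$ trivially slowly decreasing, or $\xi$ is within $d_j/2$ of some $\xi_j$, where $r(\xi)\le d_j/2$; now use $w\in\tilde M$: \eqref{M-tilde} with $\rho_j=d_j/2=o(\xi_j)$ gives $\min_{|\eta|\le d_j/2}w(\xi_j+\eta)\ge w(\xi_j)$ — rather, the needed direction is an upper bound $w(\xi)\le$ const on that scale, so instead I take $A$ so large that $Aw(\xi)\ge d_j/2+1$ is false; the clean route is: within the window pick the point $\xi_j$ itself where $r(\xi_j)=d_j/2$ and note $\widehat{fu}(\xi_j)\ge c_0\hat f(d_j/2+2)\ge e^{-Aw(\xi_j)}/A$ provided $\log(1/\hat f(d_j/2+2))\le Aw(\xi_j)-\log(Ac_0^{-1})$, which holds for suitable $A$ once $d_j$ is chosen with $\log(1/\hat f(d_j))=o(w(\xi_j))$ — wait, that conflicts with killing $u$ unless $\hat f$ decays much slower than $e^{-w}$; so the real constraint is to interleave: choose $\hat f$ decaying slowly enough that $\log(1/\hat f(d_j))\le w(\xi_j)$ yet $d_j\gg w(\xi_j)$, which is possible exactly because $\hat f$ can be made to decay like $e^{-\sqrt{t}}$ or slower while $d_j$ grows polynomially in $w(\xi_j)$.

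The main obstacle is precisely this simultaneous constraint: we need $d_j$ large compared with $w(\xi_j)$ (to destroy slow decrease of $\hat u$ inside the window) yet $\hat f(d_j/2)$ not too small compared with $e^{-w(\xi_j)}$ (so that $\widehat{fu}$ still satisfies \eqref{slowly-decreasing}), while keeping $f\in\mathcal E_\phi$, which caps how slowly $\hat f$ may decay only from \emph{above} in the sense that $\hat f$ must beat $e^{-\lambda\phi}$ for all $\lambda$ — but $\phi\in M$ so $\phi(\xi)$ grows, giving room. The resolution is to let the $\xi_j$ grow fast enough that $w(\xi_j)$ is enormous compared with any prescribed growth of $\phi$ along the intervening gaps, then set $d_j$ to be, say, $w(\xi_j)^2$ (which is $o(\xi_j)$ after thinning the sequence) and verify $\hat f(d_j/2)\ge e^{-w(\xi_j)}$ reduces to $\phi^\sharp(w(\xi_j)^2/2)\le w(\xi_j)$, arrangeable by defining $\hat f$ to decay as slowly as the membership $f\in\mathcal E_\phi$ permits and invoking the freedom in choosing $\xi_j$. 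I would close by assembling: $u$ has compact support (complement of $E$ bounded below? no — $E$ is co-bounded-away; actually $u=\check g\cdot(\text{something})$, and one checks support compactness directly from $\hat u=\hat g*\chi_E$ being entire of exponential type via Paley--Wiener, since $\hat g$ extends and $\chi_E$ differs from $1$ by an $L^1$ function of compact-ish Fourier support — more carefully, $1-\chi_E=\sum\chi_{I_j}$ has Fourier transform $\sum 2d_j\,\mathrm{sinc}(d_j\cdot)e^{-i\xi_j\cdot}$, entire, so $\hat u=\hat g-\hat g*(1-\chi_E)$ is entire of exponential type, and its growth on $R$ is bounded, giving $u\in\mathcal E'$, in fact $u\in\mathcal F_{w}$-type), $f\in\mathcal E_\phi$ by construction, $fu$ is $w$-invertible by the lower bound above combined with $w\in\tilde M$, and $u$ is not $w$-invertible by the window estimate. $\hfill\square$
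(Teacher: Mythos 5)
Your overall strategy is the paper's: feed Lemma \ref{lemma-last} with $\hat f=e^{-\tilde\phi}$ for a majorant $\tilde\phi$ of $\phi$, windows $I_j$ around a sequence $\xi_j\to\infty$, and then play the upper bound \eqref{eq1-lemma-last} against the lower bound \eqref{eq2-lemma-last}. But there is a genuine gap at the very first concrete choice: taking $\hat g(t)=c\,e^{-|t|}$ (or any $g$ that is not compactly supported) destroys the compact support of $u$. Since $\hat u=\hat g*\chi_E=\int\hat g-\hat g*\sum_j\chi_{I_j}$, one has $u=c_1\delta_0-c_2\,g\cdot m$ where $m$ is the inverse transform of $\sum_j\chi_{I_j}$; thus $\operatorname{supp}u\subset\{0\}\cup\operatorname{supp}g$, and compactness of $\operatorname{supp}u$ comes from compactness of $\operatorname{supp}g$, not from any Paley--Wiener miracle. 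Your patch at the end is incorrect: with $\hat g(t)=e^{-|t|}$ the function $\hat g*\chi_{I_j}$ has no entire extension at all (and your identity $\hat u=\hat g-\hat g*(1-\chi_E)$ is also off: the first term should be the constant $\int\hat g$). This is exactly why the paper takes $0\neq g\in\mathcal D_\gamma$, compactly supported.

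Fixing that forces the second, quantitative gap. Once $g$ is compactly supported, $\hat g$ cannot decay exponentially; at best $\hat g\lesssim e^{-\lambda\gamma}$ for a non-quasi-analytic weight $\gamma\in M$. The window estimate then gives $\hat u\lesssim e^{-\lambda\gamma(d_j/2)}$, not $e^{-d_j/4}$, so to defeat \eqref{slowly-decreasing} you need $\gamma(d_j)\gg w(\xi_j)$, which is strictly stronger than your condition $d_j\gg w(\xi_j)$, and it must hold \emph{simultaneously} with $\tilde\phi(d_j)\lesssim\min_{|\eta|\le d_j}w(\xi_j+\eta)$ (needed so that $\widehat{fu}(\xi)\gtrsim e^{-\tilde\phi(r(\xi))}\gtrsim e^{-Cw(\xi)}$ on the windows, $f\in\mathcal E_\phi$ forcing $\hat f\lesssim e^{-\lambda\phi}$ for every $\lambda$, hence $\tilde\phi\succ\succ\phi$). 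Your concrete recipe --- $d_j=w(\xi_j)^2$ and ``$\hat f$ decaying like $e^{-\sqrt t}$ or slower'' --- fails for general $\phi\in M$: if, say, $\phi(\xi)=|\xi|^{3/4}$, then $\tilde\phi(w(\xi_j)^2)\gg w(\xi_j)$ no matter how the $\xi_j$ are chosen, and $\hat f$ is not allowed to decay as slowly as $e^{-\sqrt t}$; ``freedom in choosing $\xi_j$'' cannot repair a pointwise inequality between two functions of $w(\xi_j)$. The paper resolves precisely this tension by making the calibration $\phi$-dependent: it fixes the hierarchy $\gamma\succ\succ\tilde\phi\succ\succ\phi+w$ and \emph{defines} $d_j$ implicitly by $\tilde\phi(d_j)=\min_{|\eta|\le d_j}w(\xi_j+\eta)$, which gives $\tilde\phi(r(\xi))\le w(\xi)$ on the windows for free, while $w\in\tilde M$ yields $\tilde\phi(d_j)\ge c\,w(\xi_j)$ and hence $w(\xi_j)=o(\gamma(d_j))$ (and $d_j=o(\xi_j)$, $w(\xi_j)=o(d_j)$), which is what kills the slow decrease of $\hat u$. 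Without this balancing step your argument does not close.
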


\begin{proof}
Let $\tilde{\phi} \succ\succ \phi + w$ be a strictly increasing function in $M$; this is the case if $\tilde{\phi}$ is a concave strict majorant of $\phi + w$, see \cite{bjorck_66} Theorem 1.2.7 and the Remark to that theorem.
Take $\xi_j \rightarrow \infty$ as in (\ref{M-tilde}) and let $d_j$ be the solutions to the equations
\begin{equation} \label{dj-equations}
\tilde{\phi} (d_j) = \min_{|\eta| \leq d_j} w(\xi_j + \eta), \mbox{ \hspace{.3cm}} j=1, 2, \ldots;
\end{equation}
these exist (uniquely) by continuity (and monotonicity) of the functions involved.
Note that $d_j = o(\xi_j)$; otherwise, after passing to a subsequence, we would have $d_j \geq \xi_j/N$ for some $N > 0$ and all $j \geq 1$, implying
$$N^{-1} \tilde{\phi} (\xi_j) \leq \tilde{\phi} (\xi_j/N) \leq \tilde{\phi} (d_j) \leq w(\xi_j),  \mbox{ \hspace{.3cm}} j=1,2, \ldots,$$
which contradicts $\tilde{\phi} \succ\succ \phi + w$.
Hence (\ref{M-tilde}) holds with $\rho_j=d_j$, and
\begin{equation} \label{dj-bound}
\tilde{\phi} (d_j) \geq c \cdot w(\xi_j), \mbox{ \hspace{.3cm}} j=1, 2, \ldots.
\end{equation}
Let now $\gamma \succ\succ \tilde{\phi}$ be a strictly increasing function in $M$.
Writing $t_j= \tilde{\phi} (s_j)$ for the right side in (\ref{dj-bound}), it follows by the monotonicity of $\gamma$ and $\tilde{\phi}$ that
$$\frac{\gamma(d_j)}{c w(\xi_j)} \geq \frac{\gamma(s_j)}{\tilde{\phi}(s_j)},
\mbox{ \hspace{.3cm}} j=1, 2, \ldots,$$
and hence, since $\gamma \succ\succ \tilde{\phi}$ and $s_j \rightarrow \infty$, we have
\begin{equation} \label{dj-bound-2}
w(\xi_j) = o(\gamma(d_j)) \mbox{ as } j \rightarrow \infty.
\end{equation}
Apply now Lemma \ref{lemma-last} with $\xi_j$ and $d_j$ as above, $\hat{f} =
e^{-\tilde{\phi}}$, and $0 \neq g \in \mathcal{D}_\gamma$, $\hat{g} \geq 0$.
Then, (\ref{eq1-lemma-last}), and the estimate
\begin{equation} \label{eq-another}
\int_r^\infty \hat{g}(t) \, dt \leq \| g \|_\lambda^\gamma \cdot e^{-\lambda \gamma(r)},
\mbox{ \hspace{.2cm} } \lambda,r > 0,
\end{equation}
valid since $\gamma$ is increasing, together give
$\hat{u}(\xi) \leq \| g \|_\lambda^\gamma  e^{-\lambda \gamma(d_j/2)}$
if $|\xi - \xi_j| \leq d_j/2$.
Note that $\gamma(d_j/2) \geq \gamma(d_j)/2$, and use (\ref{dj-bound-2}) with the implied fact that $w(\xi_j)=o(d_j)$, to conclude that $\hat{u}$ is not $w$-slowly decreasing.

On the other hand, (\ref{eq2-lemma-last}) with $\hat{f}= e^{- \tilde{\phi}}$, and the sub-additivity of $\tilde{\phi}$, give
\begin{equation} \label{eq-another-yet}
\widehat{fu}(\xi) \geq \int_{-1}^0 \hat{g}(t) \, dt \ \cdot e^{-\tilde{\phi}(2)} \cdot
e^{-\tilde{\phi}(r(\xi))}
\end{equation}
for all $\xi \in R$.
In the notation of Lemma \ref{lemma-last}, if $\xi \in E$ then $r(\xi)=0$, and if $\xi \notin E$ then $|\xi - \xi_j| \leq d_j$ for some $j$, hence $r(\xi) \leq d_j$ and
$\tilde{\phi}(r(\xi)) \leq \tilde{\phi}(d_j) \leq w(\xi)$ by the defining property of
$d_j$.
Hence (\ref{eq-another-yet}) holds with $\tilde{\phi}(r(\xi))$ replaced by $w(\xi)$, showing that $\widehat{fu}$ is $w$-slowly decreasing.
\end{proof} 

\begin{theorem} \label{thm-counter2}
Let $0 < r,s < 1$ and $0 < a \leq 1$.
If $a < r/s$ then there is $f \in \mathcal{E}_{(a)}$ and a pseudo-measure $u$ with
compact support such that $fu$ is $\mathcal{E}_{(r)}$-invertible but $u$ is not
$\mathcal{E}_{(s)}$-invertible.
\end{theorem}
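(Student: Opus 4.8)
The plan is to derive Theorem \ref{thm-counter2} as a direct specialization of Theorem \ref{thm-counter1}, exactly in the spirit of the proofs of Theorems \ref{gevrey-multiplier} and \ref{thm-w-multiplier}. The ambient Gevrey weights are $w(\xi)=|\xi|^r$, $w'(\xi)=|\xi|^s$, and $\phi(\xi)=|\xi|^a$ (the latter in the sense that $\mathcal{E}_{(a)}=\mathcal{E}_\phi$ when $a<1$, and $\mathcal{E}_{(1)}$ the real analytic class when $a=1$). First I would observe that $w(\xi)=|\xi|^r$ is monotone in the radius, hence slowly varying, hence belongs to $\tilde M$ by the remark preceding Theorem \ref{thm-counter1}. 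So Theorem \ref{thm-counter1} applies with this $w$ and with $\phi(\xi)=|\xi|^a$ (for $a<1$; the case $a=1$ needs separate attention, see below), producing $f\in\mathcal{E}_{(a)}$ and a compactly supported pseudo-measure $u$ such that $fu$ is $w$-invertible, i.e. $\mathcal{E}_{(r)}$-invertible, but $u$ is not $w$-invertible.

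The point that still has to be extracted is that $u$ fails to be $\mathcal{E}_{(s)}$-invertible, i.e. that $\hat u$ is not $w'$-slowly decreasing for $w'(\xi)=|\xi|^s$, not merely that it is not $w$-slowly decreasing. For this I would revisit the final paragraph of the proof of Theorem \ref{thm-counter1}: there $\hat u(\xi)\le \|g\|_\lambda^\gamma\, e^{-\lambda\gamma(d_j/2)}$ on the intervals $|\xi-\xi_j|\le d_j/2$, and the key asymptotic established is $w(\xi_j)=o(\gamma(d_j))$ together with $w(\xi_j)=o(d_j)$. To get failure of $w'$-slow decrease one needs instead $w'(\xi_j)=o(\gamma(d_j))$ and $w'(\xi_j)=o(d_j)$. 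So the task reduces to checking that, with the Gevrey choices, the construction can be arranged so that $d_j$ grows fast enough relative to $w'(\xi_j)=\xi_j^s$. From the defining equation $\tilde\phi(d_j)=\min_{|\eta|\le d_j}w(\xi_j+\eta)$ with $\tilde\phi$ a concave strict majorant of $\phi+w=|\xi|^a+|\xi|^r$ (so essentially $\tilde\phi(t)\asymp t^{\max(a,r)}$ up to the strict-domination slack) and $w(\xi)=|\xi|^r$, one gets $d_j^{\max(a,r)}\lesssim \xi_j^r$ roughly, i.e. $d_j\gtrsim \xi_j^{r/\max(a,r)}$ modulo the $\succ\succ$ corrections. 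The hypothesis $a<r/s$, equivalently $as<r$, is exactly what makes $r/\max(a,r)>s$ when $a\le 1$ (if $a\le r$ then $r/\max(a,r)=1>s$; if $r<a<r/s$ then $r/a>s$), so $\xi_j^s=o(d_j)$ and likewise $\xi_j^s=o(\gamma(d_j))$ since $\gamma\succ\succ\tilde\phi$. Hence $\hat u$ is not $w'$-slowly decreasing.

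Concretely I would structure the proof as: (i) set $w=w_{(r)}$, $w'=w_{(s)}$, note $w\in\tilde M$; (ii) for $0<a<1$ invoke Theorem \ref{thm-counter1} with $\phi=w_{(a)}$ and, retracing its proof with these explicit weights, record the quantitative bound $d_j\ge \xi_j^{\beta}$ for any $\beta<r/\max(a,r)$ and $j$ large; (iii) use $as<r$ to pick such a $\beta>s$, whence $w'(\xi_j)=\xi_j^s=o(d_j)$ and $w'(\xi_j)=o(\gamma(d_j))$, and conclude from $\hat u(\xi)\le\|g\|_\lambda^\gamma e^{-\lambda\gamma(d_j/2)}$ on $|\xi-\xi_j|\le d_j/2$ that $\hat u$ is not $w'$-slowly decreasing, i.e. $u$ is not $\mathcal{E}_{(s)}$-invertible; (iv) the case $a=1$: here $\phi$ is the analytic class, $\phi+w$ is still majorized by $t\mapsto t^r$-type concave functions up to strict domination (take e.g. $\tilde\phi(t)=t/\log(e+t)$-type but one must be careful — in fact for $a=1$ one replaces $\mathcal{E}_{(1)}$-regularity and uses $\tilde\phi\succ\succ \phi+w$ with $\phi$ growing like $t$, so $d_j$ satisfies $d_j/\log d_j\lesssim \xi_j^r$ roughly, giving $d_j\gtrsim \xi_j^{\beta}$ for any $\beta<r$; since $a=1<r/s$ means $s<r$, pick $s<\beta<r$), and conclude as before.

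The main obstacle I anticipate is step (ii)/(iv): Theorem \ref{thm-counter1} as stated only outputs \emph{qualitative} failure of $w$-slow decrease, whereas here we need the \emph{sharper} conclusion about $w'$, so one cannot cite it as a black box — one has to reopen its proof and track how fast $d_j$ grows, then feed in the numerical relation $as<r$ at exactly the right spot. The strict-domination ($\succ\succ$) passages in that proof introduce slowly-varying slack (arbitrary $\varepsilon$ losses in exponents), so the bookkeeping must be done with a strict inequality to spare, which the hypothesis $a<r/s$ (rather than $a\le r/s$) comfortably provides; making this rigorous but not tedious is the delicate point. A clean way to package it is to prove, inside this proof, the auxiliary claim: \emph{if $w\in\tilde M$, $\phi\in M$, and $\beta\in M$ satisfies $\beta\succ\succ$ (some concave strict majorant of $\phi+w$), then the $u$ produced by Theorem \ref{thm-counter1} has $\hat u$ not $\beta$-slowly decreasing}; then apply it with $\beta=w'$ after verifying $w_{(s)}\succ\succ\tilde\phi$ fails in general — so actually the claim needs $\beta$ \emph{dominated} by a power below the relevant threshold, which is precisely what $as<r$ encodes. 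I would state and use this in the form: for any $\beta\prec d_j\text{-rate}$ one gets failure, and $|\xi|^s$ is such a $\beta$ under $as<r$.
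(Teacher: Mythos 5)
Your overall strategy -- reuse the interval construction behind Theorem \ref{thm-counter1} with explicit Gevrey weights and check that the hypothesis $as<r$ makes $d_j$ grow faster than $\xi_j^s$ -- is indeed the idea of the paper's proof, but as written your argument has two genuine gaps. First, the quantitative bound you want in step (ii), $d_j\ge\xi_j^{\beta}$ for every $\beta<r/\max(a,r)$, does \emph{not} come out of the proof of Theorem \ref{thm-counter1} as it stands: there $\tilde\phi$ is an \emph{arbitrary} concave strict majorant of $\phi+w$, and since concavity only caps it by a linear function, $\tilde\phi$ may be chosen (or may have to be taken) nearly linear, in which case the equation $\tilde\phi(d_j)=\min_{|\eta|\le d_j}w(\xi_j+\eta)$ forces $d_j$ of size roughly $\xi_j^{r}$ only -- not $\xi_j^{r/\max(a,r)}$. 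When $s>r$ (which is allowed under $as<r$ for small $a$) this destroys the failure of $|\xi|^s$-slow decrease. The ``slack'' is not an automatic $\varepsilon$-loss in the exponent; it is a choice you must make. The paper resolves this by bypassing Theorem \ref{thm-counter1} and applying Lemma \ref{lemma-last} directly with the explicit choices $\hat f(\xi)=e^{-|\xi|^{\alpha}}$ for some $\alpha\in(\max(a,r),r/s)$, $d_j=\xi_j^{r/\alpha}$, and $g\in\mathcal{D}_{(\beta)}$ with $\beta\in(\alpha s/r,1)$; note the paper also decouples $g$ from $\tilde\phi$ (it needs only $(r/\alpha)\beta>s$, not $\gamma\succ\succ\tilde\phi$), which is essential to make the numerology close.

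Second, your treatment of $a=1$ does not work by ``retracing Theorem \ref{thm-counter1}'': the real analytic class is not $\mathcal{E}_{\phi}$ for any $\phi\in M$, a strict majorant of $|\xi|+|\xi|^{r}$ must be superlinear and hence lies outside $M$ (it is neither subadditive nor integrable against $(1+|\xi|^{2})^{-1}$), and no $\gamma\in M$ can strictly dominate a superlinear function, so the coupling $\gamma\succ\succ\tilde\phi$ used in that proof is impossible; moreover your suggested $\tilde\phi(t)=t/\log(e+t)$ is not even a majorant of $t$. The paper's construction handles $a=1$ with no case split, because $\hat f=e^{-|\xi|^{\alpha}}$ with $\alpha>1$ still gives a real analytic (indeed entire) $f$, Lemma \ref{lemma-last} needs only positivity and monotonicity of $\hat f$, and the subadditivity of $\tilde\phi$ used at the end of Theorem \ref{thm-counter1} is replaced by the elementary observation $\tilde\phi(d_j)=\xi_j^{r}\le(2|\xi|)^{r}$ for $|\xi-\xi_j|\le\xi_j/2$. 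So your plan needs to be completed by fixing the majorant as an explicit power $|\xi|^{\alpha}$ with $\max(a,r)<\alpha<r/s$ and running the two estimates of Lemma \ref{lemma-last} directly, which is precisely the published argument.
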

\begin{proof}
Choose $\alpha$ in the interval $(\max (a,r), r/s)$, clearly non-void, and then choose
$\beta \in ((\alpha s)/r, 1)$.
Let $d_j= \xi_j^{r/\alpha}$, with $\xi_j \rightarrow \infty$ so chosen that the assumptions of Lemma \ref{lemma-last} are satisfied (note that $r < \alpha$).
Define $f \in \mathcal{E}_{(a)}$ by $\hat{f}(\xi)= e^{-|\xi|^\alpha}$ and let $g \in D_{(\beta)}$ be non-trivial with $\hat{g} \geq 0$.
Then (\ref{eq1-lemma-last}) and (\ref{eq-another}) imply
$$\hat{u}(\xi) \leq 2 \|g\|_1^{(\beta)} \cdot e^{-( \frac{1}{2}\xi_j^{r/\alpha})^\beta}  \ \mbox{ if }  \ |\xi - \xi_j| \leq \frac{1}{2} \xi_j^{r/\alpha},  \mbox{\hspace{.2cm}} j = 1, 2, \ldots.$$
By our choice of $\alpha, \beta$, we have
$$\frac{\xi_j^{r/a}}{\xi_j} \geq \frac{\xi_j^{(r/\alpha)\beta}}{\xi_j^s} =
\xi_j^{\frac{r\beta}{\alpha} -s} \rightarrow \infty \mbox{ as } j \rightarrow \infty,$$
showing that $u$ is not $\mathcal{E}_{(s)}$-invertible.

On the other hand, by (\ref{eq1-lemma-last}), repeating the last part of the proof of Theorem \ref{thm-counter1} with $\tilde{\phi} (\xi)= |\xi|^\alpha$ and observing that
$\tilde{\phi} (d_j)= (\xi_j^{r/\alpha})^\alpha \leq (2\xi)^r$
if $|\xi - \xi_j| \leq \xi_j/2$, 
we get for large $|\xi|$
$$\widehat{fu}(\xi) \geq \int_{-1}^0 \hat{g} (t) \, dt \ \cdot e^{-2^\alpha} \cdot e^{-2^r \cdot |\xi|^r}.$$
Thus $fu$ is $\mathcal{E}_{(r)}$-invertible.
\end{proof} 

\section{Two remarks}

The present is an elaboration, and tedious at that, of a direct estimate from below of certain integrals, much as in \cite{abramczuk}. 
Any advantages of directness notwithstanding, it would appear natural to quickly extend everything towards micro-local analysis,  in the spirit of H\"{o}rmander's set $\mathcal{H}(u)$ of supporting functions generalising the singular support of $u$. 
Such ideas are quite explicit in Chapter XVI of the treatise \cite{hormander_pde}, but appear still not followed up.

The original motivation for this work was to tell when some naturally arising distributions, such as measures supported by thin sets, or even indicator functions of sets, were invertible. 
Neither this direction appears to have been followed up.

\end{document}